\documentclass[a4paper
,12pt
,leqno
,twoside
]{article}

\usepackage{amsmath, amsthm, amssymb,bm}
\numberwithin{equation}{section}

\oddsidemargin=0.0 cm
\evensidemargin=0.0 cm
\textwidth=16.0 cm
\textheight=21.0 cm


\newcommand{\bC}{\mathbb{C}}

\newcommand{\bE}{\mathbb{E}}

\newcommand{\mf}[1]{\mathfrak{#1}}
\newcommand{\mr}[1]{\mathrm{#1}}
\newcommand{\mcal}[1]{\mathcal{#1}}

\def\tr{\mathrm{tr} \,}

\def\({ \left( }
\def\){ \right)}

\def\trans#1{\mathord{\mathopen{{\vphantom{#1}}^t}#1}}



\theoremstyle{plain}
\newtheorem{thm}{Theorem}[section]
\newtheorem{prop}[thm]{Proposition}
\newtheorem{lem}[thm]{Lemma}
\newtheorem{cor}[thm]{Corollary}
\theoremstyle{definition}

\newtheorem{example}{Example}
\newtheorem{remark}{Remark}
\theoremstyle{conjecture}

\theoremstyle{problem}

\title{\bfseries Moments of a single entry of circular orthogonal ensembles and Weingarten calculus
}
\author{\textsc{Sho Matsumoto} 
\thanks{The author was partially supported by Grant-in-Aid for Young Scientists (B) No. 22740060.
} \\
Graduate School of Mathematics, Nagoya University, \\
 Nagoya, 464-8602, Japan. \\
\qquad E-mail: sho-matsumoto@math.nagoya-u.ac.jp
}

\date{\empty}

\pagestyle{myheadings}
\markboth{S. MATSUMOTO}{Moments of a single entry of COEs}

\setcounter{tocdepth}{1}

\begin{document}

\maketitle

\begin{abstract}
Consider a symmetric unitary random matrix $V=(v_{ij})_{1 \le i,j \le N}$
from a circular orthogonal ensemble.
In this paper, we study moments of a single entry $v_{ij}$.
For a diagonal entry $v_{ii}$ we give the explicit values of the moments,
and for an off-diagonal entry $v_{ij}$ 
we give leading and subleading terms in the asymptotic expansion with respect to 
a large matrix size $N$.
Our technique is to apply the Weingarten calculus for 
a Haar-distributed unitary matrix.

\noindent
{\bf Keywords}: circular orthogonal ensemble, Weingarten calculus, random matrix,
moments \\
\noindent
{\bf Mathematics Subject Classification (2000)}:
15A52, 
28C10, 
43A85.  
\end{abstract}

\section{Introduction and results}

In random matrix theory, given a random matrix $X=(x_{ij})_{1 \le i,j \le N}$,
the study of distributions for its eigenvalues is the main theme. 
Another typical theme is to study joint moments for matrix entries
$\bE [x_{i_1 j_1} x_{i_2 j_2} \cdots x_{i_n j_n}]$ or
\begin{equation} \label{eq:jointmoments}
\bE [x_{i_1 j_1} x_{i_2 j_2} \cdots x_{i_n j_n}
 \overline{x_{i_1' j_1'} x_{i_2' j_2'} \cdots x_{i_m' j_m'}}].
\end{equation}
When $x_{ij}$ are Gaussian random variables, 
the technique of the calculation for the joint moments is called
the Wick calculus.
For a Wishart matrix and inverted Wishart matrix, such moments are calculated in
\cite{GLM1,GLM2, KN, Mat_Wishart}.
For a Haar-distributed matrix from  classical Lie groups,
the corresponding technique is called {\it Weingarten calculus}
and developped in \cite{C,CM,CS,Mat_JMortho,MN,Novak,W}.
In those calculuations, we employ deep combinatorics of permuatations and matchings
and representation theory of symmetric groups and hyperoctahedral groups.

There are three much-studied circular ensembles:
circular orthogonal ensembles (COEs), circular unitary ensembles (CUEs), and 
circular symplectic ensembles (CSEs).
The density function for their eigenvalues $\Lambda_1,\Lambda_2,\dots,\Lambda_N$
is proportional to $\prod_{1 \le i<j \le N} |\Lambda_i-\Lambda_j|^\beta$
with $\beta=1$ (COE), $\beta=2$ (CUE), and $\beta=4$ (CSE).
The CUE is nothing but the unitary group equipped with
its Haar probability measure. 
See \cite{Mehta} for details.

In this paper, we consider the COE,
which is the probability space of symmetric unitary matrices
by the property of being invariant under 
automorphisms
$$
V \to \trans{U_0} V U_0,
$$
where $U_0$ is a unitary matrix.
It is also a realization of the symmetric space $U(N)/O(N)$,
where $U(N)$ and $O(N)$ is the unitary and real orthogonal group of degree $N$,
respectively.
If $U$ is a Haar-distributed unitary matrix 
from the unitary group $U(N)$, i.e., if $U$ is a CUE matrix,
 a random matrix $V$ from the COE can be given by
$V= \trans{U} U$. 
In other words,
if $V=(v_{ij})$ and $U=(u_{ij})$
then
\begin{equation} \label{entry:COE-CUE}
v_{ij}= \sum_{k=1}^N u_{ki} u_{kj}.
\end{equation}
Therefore a joint moment of the form
\eqref{eq:jointmoments} with $X=V$
can be written as a sum of joint moments of the forms \eqref{eq:jointmoments} with $X=U$.
Since the joint moments for a CUE matrix $U$ can be computed by
the Weingarten calculus \cite{C,CM,MN,W},
we can compute the joint moments for the COE matrix $V$, in principle.
However, in general, their computations seem quite complicated.

In the light of that situation, 
in this paper we focus on only moments $\bE[|v_{ij}|^{2n}]$ for a single matrix entry $v_{ij}$.
Diagonal entries $v_{ii}$ and
off-diagonal entries $v_{ij}$ have different distributions.
For a diagonal entry,
we give the explicit expression for $\bE[|v_{ii}|^{2n}]$ as follows.

\begin{thm} \label{thm:COE-diagonal-moment}
Let $V=(v_{ij})_{1 \le i,j \le N}$ be an $N \times N$ COE matrix.
For positive integers $i,N,n$ with $1 \le i \le N$,
$$
\bE[|v_{ii}|^{2n}] = 
\frac{2^n n!}{(N+1)(N+3) \cdots (N+2n-1)}.
$$
\end{thm}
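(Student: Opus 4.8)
The plan is to use the Weingarten calculus exactly as advertised, reducing the COE moment to a sum of CUE joint moments via \eqref{entry:COE-CUE}. With $V=\trans{U}U$ and $U=(u_{ij})$ Haar-distributed on $U(N)$, the diagonal entry is $v_{ii}=\sum_{k=1}^N u_{ki}^2$, so expanding the $n$-th power gives
$$
\bE[|v_{ii}|^{2n}]=\sum_{k_1,\dots,k_n=1}^{N}\ \sum_{l_1,\dots,l_n=1}^{N}
\bE\!\left[u_{k_1 i}^2\cdots u_{k_n i}^2\,\overline{u_{l_1 i}}^{\,2}\cdots\overline{u_{l_n i}}^{\,2}\right].
$$
Each summand is a joint moment of $2n$ entries of $U$ against $2n$ entries of $\overline U$, to which the unitary Weingarten formula applies. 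The key first observation I would record is that every column index occurring is equal to $i$; hence the product of Kronecker deltas coming from the column indices is identically $1$, and the sum over one of the two symmetric-group variables decouples entirely, leaving only the row-index deltas.

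The simplification that makes this tractable is the identity $\sum_{\tau\in S_m}\Wg(N,\tau)=\frac{1}{N(N+1)\cdots(N+m-1)}$, which follows from the defining orthogonality relation for the Weingarten function by summing it against the constant function. Applying it with $m=2n$ collapses the $\tau$-sum regardless of $\sigma$, and what remains is
$$
\bE[|v_{ii}|^{2n}]=\frac{1}{\prod_{j=0}^{2n-1}(N+j)}\sum_{\sigma\in S_{2n}}N^{b(\sigma)},
$$
where $N^{b(\sigma)}$ counts the admissible index tuples $(k_\bullet,l_\bullet)$ selected by the row deltas for a given $\sigma$. Concretely, $b(\sigma)$ is the number of connected components of the $2$-regular graph whose vertices are the blocks $\{2a-1,2a\}$ on the $u$-side and on the $\overline u$-side, with an edge joining the block of $a$ to the block of $\sigma(a)$.

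The heart of the argument is the evaluation of $\sum_{\sigma\in S_{2n}}N^{b(\sigma)}$, and this is the step I expect to be the real obstacle. I would handle it by a change of variables: writing $\mathfrak{t}=(1\,2)(3\,4)\cdots(2n-1\,2n)$ and $\beta=\sigma^{-1}\mathfrak{t}\sigma$, the quantity $b(\sigma)$ becomes the number of cycles $c(\mathfrak{t},\beta)$ of the union of the two perfect matchings $\mathfrak{t}$ and $\beta$ of $\{1,\dots,2n\}$. As $\sigma$ runs over $S_{2n}$ the involution $\beta$ ranges over all fixed-point-free involutions, each attained $2^n n!$ times (the centralizer order of $\mathfrak{t}$), so
$$
\sum_{\sigma\in S_{2n}}N^{b(\sigma)}=2^n n!\sum_{\beta}N^{c(\mathfrak{t},\beta)}.
$$
The remaining matching sum is the classical Brauer-type loop count, and I would prove $\sum_{\beta}N^{c(\mathfrak{t},\beta)}=N(N+2)(N+4)\cdots(N+2n-2)$ by induction on $n$, tracking the partner of a fixed element through $\beta$. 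Substituting and cancelling $\prod_{j=0}^{2n-1}(N+j)$ against $N(N+2)\cdots(N+2n-2)$ leaves exactly $2^n n!/\bigl((N+1)(N+3)\cdots(N+2n-1)\bigr)$, as claimed. As an independent check one can rerun the whole computation through the Gaussian model $u_{ki}=g_k/\|g\|$ with $g$ a standard complex Gaussian vector, where $\bE[\|g\|^{4n}]=\prod_{j=0}^{2n-1}(N+j)$ and $\bE[|\sum_k g_k^2|^{2n}]$ is read off from the generating function $(1-4x)^{-N/2}$; this confirms both the constant $2^n n!$ and the product $N(N+2)\cdots(N+2n-2)$.
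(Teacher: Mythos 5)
Your argument is correct and follows essentially the same route as the paper: both collapse the column-index permutation sum via $\sum_{\tau\in S_{2n}}\mathrm{Wg}^{U(N)}_{2n}(\tau)=1/\bigl(N(N+1)\cdots(N+2n-1)\bigr)$ and reduce the remaining row-index count to the perfect-matching loop sum $\sum_{\mathfrak{m}}N^{\kappa(\mathfrak{m})}=N(N+2)\cdots(N+2n-2)$, established by the same induction (the paper's Lemma \ref{lem1}). The only difference is bookkeeping: the paper groups the row tuples by their type $\mu$ and passes through $\prod_j(2\mu_j-1)!!$ (Lemma \ref{lem2}), whereas you exchange the order of summation and conjugate $\mathfrak{t}$ by $\sigma$ to land on the matching sum directly.
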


Unfortunately, we could not obtain a similar closed expression
for an off-diagonal entry $v_{ij}$ $(i \not= j)$.
However,
we give the leading and sub-leading terms in
the asymptotic expansion of $\bE[|v_{ij}|^{2n}]$ as $N \to \infty$.

\begin{thm} \label{thm:COE-offdiagonal-moment}
Fix positive integers $i,j,n$ with $i \not= j$. 
Let $V^{(N)}=(v_{ij}^{(N)})$, $N \ge 1$,  be a sequence of $N \times N$ COE matrices.
As the matrix size $N$ goes to the infinity,
$$
\bE[|v_{ij}^{(N)}|^{2n}]= n! \(N^{-n} - \frac{n(n+1)}{2}  N^{-n-1}\)
+O(N^{-n-2}). 
$$
\end{thm}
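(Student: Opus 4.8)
The plan is to start from the realization $v_{ij}=\sum_{k=1}^N u_{ki}u_{kj}$ in \eqref{entry:COE-CUE} and expand
$$
\bE[|v_{ij}|^{2n}]=\bE\big[v_{ij}^{\,n}\,\overline{v_{ij}}^{\,n}\big]
=\sum_{\mbf{k},\mbf{l}\in\{1,\dots,N\}^n}
\bE\Big[\prod_{a=1}^n u_{k_a i}u_{k_a j}\prod_{b=1}^n \overline{u_{l_b i}}\,\overline{u_{l_b j}}\Big],
$$
so that the problem reduces to a mixed moment of $2n$ factors $u$ and $2n$ factors $\bar u$ of a CUE matrix. I would then apply the unitary Weingarten formula of \cite{C,CM,MN,W}, placing the $u$-factors in positions $1,\dots,2n$ with row indices $k_{\lceil\alpha/2\rceil}$ and alternating column indices $i,j,i,j,\dots$, and likewise for the conjugates. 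The expectation becomes a double sum over a row-matching $\sigma$ and a column-matching $\tau$ in $S_{2n}$ of products of Kronecker deltas times $\Wg(\sigma\tau^{-1},N)$.

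Two simplifications then occur. First, because the column pattern is $(i,j,i,j,\dots)$ and $i\neq j$, the column deltas vanish unless $\tau$ preserves parity, i.e.\ $\tau$ lies in the subgroup $H\cong S_n\times S_n$ permuting odd and even positions separately, in which case the column factor equals $1$. Second, summing the row deltas over the free indices $\mbf{k},\mbf{l}$ gives exactly $N^{c(\sigma)}$, where $c(\sigma)$ is the number of connected components of the $2$-regular bipartite multigraph on the $n$ $k$-vertices and $n$ $l$-vertices with edges $\{k_{\lceil\alpha/2\rceil},\,l_{\lceil\sigma(\alpha)/2\rceil}\}$, $\alpha=1,\dots,2n$. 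This yields the master identity
$$
\bE[|v_{ij}|^{2n}]=\sum_{\tau\in H}\ \sum_{\sigma\in S_{2n}}N^{c(\sigma)}\,\Wg(\sigma\tau^{-1},N).
$$

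Next I would insert the asymptotics $\Wg(\pi,N)=N^{-2n-|\pi|}\big(\mu(\pi)+O(N^{-2})\big)$, where $|\pi|=2n-(\text{number of cycles of }\pi)$ and $\mu(\pi)$ is the product of signed Catalan numbers over the cycles of $\pi$, so each term contributes at order $N^{\,c(\sigma)-2n-|\sigma\tau^{-1}|}$. For the leading term, observe that $c(\sigma)\le n$, with equality exactly when $\sigma$ preserves the pairing $\{1,2\},\{3,4\},\dots$; the exponent is then $-n-|\sigma\tau^{-1}|$, so its maximum $-n$ forces $\sigma=\tau$, and the two conditions together pin $\sigma=\tau$ to the diagonal copy of $S_n$ inside $H$. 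This gives exactly $n!$ terms, each with leading coefficient $\mu(e)=1$, hence the leading behaviour $n!\,N^{-n}$.

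The delicate part, and the main obstacle, is the subleading term. Since the genus expansion of $\Wg$ proceeds in steps of $N^{-2}$, the corrections to the leading configurations land at $N^{-n-2}$; therefore the $N^{-n-1}$ coefficient comes only from configurations with $c(\sigma)-|\sigma\tau^{-1}|=n-1$, namely either (i) $c(\sigma)=n$ (pairing-preserving $\sigma$) with $\sigma\tau^{-1}$ a single transposition, or (ii) $\sigma=\tau\in H$ with $c(\sigma)=n-1$ (one graph-cycle of length four). I would enumerate both families, tracking the signs coming from $\mu$ — a factor $-1$ for a transposition in family (i), and the Catalan contribution of the length-four cycle in family (ii) — and show that their combined leading coefficients sum to $-n!\,\tfrac{n(n+1)}{2}$. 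The hard work lies precisely in this bookkeeping: counting the parity-preserving $\tau$ compatible with each admissible crossing and verifying that the partial sums assemble into the stated coefficient — a check the argument already passes at $n=1$, where family (ii) is empty, family (i) contributes a single $-N^{-2}$, and one recovers $\tfrac{1}{N+1}=N^{-1}-N^{-2}+\cdots$.
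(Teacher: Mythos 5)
Your setup is sound and is essentially a reorganization of the paper's own argument: your master identity $\bE[|v_{ij}|^{2n}]=\sum_{\tau\in H}\sum_{\sigma\in S_{2n}}N^{c(\sigma)}\Wg(\sigma\tau^{-1},N)$ is the same double sum that the paper obtains in \eqref{eq:COEexpand1} and Proposition \ref{prop:offdiagonal-sum}, with the sum over intermediate indices $\bm{k},\bm{k}'$ carried out first (your $N^{c(\sigma)}$) rather than grouped by the type $\mu$ of $\bm{k}$. Your identification of $H$ with the stabilizer $S_{2n}^{*}$, your leading-order analysis, and your classification of the two families contributing at order $N^{-n-1}$ are all correct. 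But the proof has a genuine gap exactly where you flag it: the coefficient $-\tfrac{n(n+1)}{2}$ is the entire content of the theorem beyond the (easy) leading term, and you assert rather than derive it. ``I would enumerate both families \dots and show that their combined leading coefficients sum to $-n!\,\tfrac{n(n+1)}{2}$'' is a promissory note, not an argument; checking $n=1$ (where family (ii) is empty) does not test the cancellation between the two families at all. There is also a small conceptual slip in your description of family (ii): there $\sigma=\tau$, so $\sigma\tau^{-1}=\mathrm{id}$ and the M\"obius/Catalan factor is just $1$; the length-four graph cycle only lowers the power $N^{c(\sigma)}$ and contributes no sign or Catalan number. What must actually be counted is: family (i) consists of pairs $(\sigma,\tau)$ with $\sigma$ pair-preserving, $\tau\in H$, and $\sigma\tau^{-1}$ a transposition (there are $n!\,n^{2}$ of these, each contributing $-N^{-n-1}$), and family (ii) consists of $\sigma=\tau\in H$ whose associated permutation $\alpha\beta^{-1}\in S_n$ is a transposition (there are $n!\binom{n}{2}$ of these, each contributing $+N^{-n-1}$); only then does $-n^{2}+\binom{n}{2}=-\tfrac{n(n+1)}{2}$ emerge. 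Neither count appears in your proposal.

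For comparison, the paper avoids this single large enumeration by splitting according to the type $\mu$ of $\bm{k}$ first (Proposition \ref{prop:offdiagonal-sum}), so that only $\mu=(1^{n})$ and $\mu=(2,1^{n-2})$ survive to the first two orders, and the required counts become elementary: for $\mu=(1^{n})$ one needs only the $n$ transpositions in $S_{(2^{n})}$ and the $2\binom{n}{2}$ transpositions in $S_{2n}^{*}$ (Lemma \ref{lem:asymW}, giving $-n^{2}$), together with the binomial correction $-\binom{n}{2}$ from the falling factorial $N(N-1)\cdots(N-n+1)$; for $\mu=(2,1^{n-2})$ one needs only $|S_{2\mu}\cap S_{2n}^{*}|=4$. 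If you want to complete your version, you should either carry out the two enumerations above directly (the family (i) count over the hyperoctahedral-type group is the nontrivial one), or refactor as the paper does so that each individual count is transparent.
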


Note that Theorem \ref{thm:COE-diagonal-moment} implies 
$$
\bE[|v_{ii}^{(N)}|^{2n}] = 2^n n! (N^{-n} -n^2 N^{-n-1}) +O(N^{-n-2}). 
$$

The following result is obtained in \cite[Corollary 1.1]{Jiang}
in an analytic  approach.
We have its new algebraic proof via CUEs
from Theorem \ref{thm:COE-diagonal-moment} and \ref{thm:COE-offdiagonal-moment}.

\begin{cor} \label{cor:limitGaussian}
Fix positive integers $i,j,n$ with $i \not= j$. 
Let $V^{(N)}=(v_{ij}^{(N)})$, $N \ge 1$,  be a sequence of $N \times N$ COE matrices.
Then, as $N \to \infty$, both $\sqrt{N/2} v_{ii}^{(N)}$ and $\sqrt{N} v_{ij}^{(N)}$ converge to 
a standard complex Gaussian random variable.
\end{cor}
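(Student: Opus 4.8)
The plan is to apply the method of moments, viewing each rescaled entry as a complex random variable and identifying its limit through all of its mixed moments. Recall that a standard complex Gaussian $Z$ is characterized by
$$
\bE[Z^a \overline{Z}^b] = \delta_{ab}\, a! \qquad (a,b \ge 0),
$$
so it suffices to prove two things for $W_N := \sqrt{N/2}\, v_{ii}^{(N)}$ and, separately, for $W_N := \sqrt{N}\, v_{ij}^{(N)}$: that the unbalanced moments ($a \neq b$) vanish, and that the balanced moments ($a=b=n$) converge to $n!$.

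The vanishing of the unbalanced moments I would obtain from the rotational invariance of the COE. Choosing the scalar $U_0 = e^{\sqrt{-1}\,\theta} I_N$ in the defining invariance $V \to \trans{U_0} V U_0$ shows that $V$ and $e^{2\sqrt{-1}\,\theta} V$ have the same distribution for every $\theta \in \bR$; in particular each entry satisfies $v_{ij} \overset{d}{=} e^{2\sqrt{-1}\,\theta} v_{ij}$. Hence $\bE[v_{ij}^a \overline{v_{ij}}^b] = e^{2\sqrt{-1}\,\theta (a-b)} \bE[v_{ij}^a \overline{v_{ij}}^b]$ for all $\theta$, which forces this moment to be zero unless $a=b$. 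The same argument applies verbatim to the diagonal entry $v_{ii}$.

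For the balanced moments I would merely rescale the two main theorems. By Theorem \ref{thm:COE-diagonal-moment},
$$
\bE\bigl[|\sqrt{N/2}\, v_{ii}^{(N)}|^{2n}\bigr] = \frac{N^n\, n!}{(N+1)(N+3)\cdots(N+2n-1)} \longrightarrow n!,
$$
since the denominator is asymptotic to $N^n$; and by Theorem \ref{thm:COE-offdiagonal-moment},
$$
\bE\bigl[|\sqrt{N}\, v_{ij}^{(N)}|^{2n}\bigr] = n! \( 1 - \frac{n(n+1)}{2} N^{-1} \) + O(N^{-2}) \longrightarrow n!.
$$
Thus in both cases every balanced moment tends to $n!$ while every unbalanced moment is identically zero, so all complex moments of $W_N$ converge to those of $Z$.

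To conclude, I would invoke the method of moments. Since the real and imaginary parts of a complex Gaussian are jointly Gaussian and hence satisfy Carleman's condition, the law of $Z$ is determined by its moments; convergence of all joint moments $\bE[W_N^a \overline{W_N}^b] \to \delta_{ab}\, a!$ then yields convergence in distribution $W_N \to Z$. The only genuinely nontrivial step is the vanishing of the unbalanced moments, and I expect that to be the main (though short) obstacle; the scalar-rotation argument disposes of it cleanly, and the remainder is an elementary asymptotic evaluation together with the standard moment-convergence criterion.
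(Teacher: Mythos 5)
Your proof is correct and follows the same overall strategy as the paper: the method of moments, with the balanced moments read off by rescaling Theorems \ref{thm:COE-diagonal-moment} and \ref{thm:COE-offdiagonal-moment} and the limit identified as the standard complex Gaussian via moment determinacy (the paper cites Petz--R\'effy for this last step rather than spelling out Carleman). The one point where you diverge is the vanishing of the unbalanced moments $\bE[W_N^a\overline{W_N}^b]$, $a\neq b$: the paper gets this from Lemma \ref{lem:COEcondition}, which is itself a consequence of the Weingarten formula (the CUE expectation $\bE[U(\bm{i},\bm{j}|\bm{i}',\bm{j}')]$ vanishes unless the index multisets match, forcing $n=m$), whereas you use the scalar invariance $V\stackrel{\mathrm{d}}{=}\trans{U_0}VU_0$ with $U_0=e^{\sqrt{-1}\,\theta}I_N$, i.e.\ $v_{ij}\stackrel{\mathrm{d}}{=}e^{2\sqrt{-1}\,\theta}v_{ij}$. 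Both are valid; your phase-rotation argument is more elementary and self-contained (it needs only the defining invariance of the COE, not the unitary Weingarten calculus), while the paper's route comes for free since Lemma \ref{lem:COEcondition} was already established for the general joint-moment computations. The asymptotic evaluations of the balanced moments in your write-up are exactly right.
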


This paper is organized as follows.
In Section 2, we review the Weingarten calculus for the unitary group. 
In Section 3, the proofs for main theorems are given.

As a closing statement, we give  facts for the moment $\bE[|x_{ij}|^{2n}]$,
where $x_{ij}$ is a matrix entry of a Haar-distributed unitary or orthogonal matrix.
Compare with our theorems.
Let $1 \le i,j \le N$.
For a Haar-distributed unitary matrix $U=(u_{ij})$ of size $N$,
$$
\bE[|u_{ij}|^{2n}]= \frac{n!}{N(N+1)\cdots (N+n-1)};
$$ 
For a Haar-distributed orthogonal matrix $O=(o_{ij})$ of size $N$,
$$
\bE[o_{ij}^{2n}]= \frac{(2n-1)!!}{N(N+2) \cdots (N+2n-2)}.
$$
They are obtained in \cite{Novak,CM}.

\begin{remark}
After the submission of the present article,
the author obtained stronger results in 
\cite{Mat_COE_general}, which includes the theorems of this article.
\end{remark}

\section{Weingarten calculus for unitary groups}

\subsection{Weingarten functions}

A {\it partition} $\lambda=(\lambda_1,\lambda_2,\dots)$
of a positive integer $n$ is 
a weakly-decreasing sequence of nonnegative integers
satisfying $n=|\lambda|:=\sum_{i \ge 1} \lambda_i$.
Then we write $\lambda \vdash n$.
The number of nonzero $\lambda_i$ is called the length of $\lambda$
and written as $\ell(\lambda)$: $\ell(\lambda)=|\{i \ge 1 \ | \ \lambda_i >0\}|$.
Put $\lambda != \lambda_1 ! \lambda_2 ! \cdots$.

The {\it Weingarten function} for the unitary group $U(N)$
is a class function on the symmetric group $S_n$ and given by
$$
\mr{Wg}^{U(N)}_n(\sigma) = \frac{1}{n!}
 \sum_{\begin{subarray}{c} \lambda \vdash n \\ \ell(\lambda) \le N \end{subarray}} 
\frac{f^\lambda}{\prod_{(i,j)\in\lambda} (N+j-i)} \chi^\lambda(\sigma)
\qquad (\sigma \in S_n).
$$
Here $\chi^\lambda$ is the irreducible character of $S_n$ associated with $\lambda$
and $f^\lambda$ is its degree (i.e., $f^\lambda$ is the value of $\chi^\lambda$
at the identity permutation $\mr{id}_n$).
The product $\prod_{(i,j) \in \lambda}$ stands for 
$\prod_{i=1}^{\ell(\lambda)} \prod_{j=1}^{\lambda_i}$.
We note that $\mr{Wg}_n^{U(N)}(\sigma^{-1})=\mr{Wg}_n^{U(N)}(\sigma)$
and $\mr{Wg}_n^{U(N)}(\sigma \tau)= \mr{Wg}_n^{U(N)}(\tau\sigma)$
for all $\sigma,\tau \in S_n$.

\begin{lem}[\cite{C}] \label{lem:asymWg}
Fix a positive integer $n$ and permutation $\sigma \in S_n$.
As $N \to \infty$,
the Weingarten function $\mr{Wg}^{U(N)}_n(\sigma)$ has the following asymptotic properties. 
\begin{enumerate}
\item If $\sigma$ is the identity $\mr{id}_n$,
then $\mr{Wg}^{U(N)}_n(\mr{id}_n)= N^{-n}+ O(N^{-n-2})$;
\item If $\sigma$ is a transposition, then
$\mr{Wg}^{U(N)}_n(\sigma)= -N^{-n-1}+ O(N^{-n-3})$;
\item Otherwise, $\mr{Wg}^{U(N)}_n(\sigma)= O(N^{-n-2})$.
\end{enumerate}
\end{lem}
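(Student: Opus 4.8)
The plan is to read off the first two terms of the Laurent expansion of $\mr{Wg}^{U(N)}_n(\sigma)$ in powers of $1/N$ directly from the spectral formula, using nothing beyond the character theory of $S_n$, and then to obtain the \emph{sharpened} error exponents in (1) and (2) from a parity functional equation. Throughout I fix $n$ and $\sigma$ and take $N\ge n$, so that every $\lambda\vdash n$ satisfies $\ell(\lambda)\le N$ and the defining sum runs over all partitions of $n$; for the expansion at $N=\infty$ I regard this sum as a rational function of $N$.

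First I would expand each summand. The product $\prod_{(i,j)\in\lambda}(N+j-i)$ is monic of degree $n=|\lambda|$ in $N$ with subleading coefficient equal to the content sum $e_1(\lambda):=\sum_{(i,j)\in\lambda}(j-i)$, so
$$
\frac{f^\lambda}{\prod_{(i,j)\in\lambda}(N+j-i)}=f^\lambda N^{-n}\Bigl(1-\frac{e_1(\lambda)}{N}+O(N^{-2})\Bigr).
$$
Summing over $\lambda$, the coefficient of $N^{-n}$ in $\mr{Wg}^{U(N)}_n(\sigma)$ is $\tfrac1{n!}\sum_{\lambda\vdash n}f^\lambda\chi^\lambda(\sigma)$, which is the value at $\sigma$ of the character of the regular representation divided by $n!$, hence $1$ if $\sigma=\mr{id}_n$ and $0$ otherwise. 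This gives the leading term $N^{-n}$ in (1) and annihilates the leading order in (2) and (3).

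Next I would compute the $N^{-n-1}$ coefficient. Here I use the classical fact that the central class sum $\sum_{1\le a<b\le n}(a\,b)$ acts on the irreducible module indexed by $\lambda$ as the scalar $e_1(\lambda)$; taking traces identifies this scalar as $e_1(\lambda)=\binom{n}{2}\chi^\lambda(\tau)/f^\lambda$ for any transposition $\tau$. Substituting, the $N^{-n-1}$ coefficient equals $-\tfrac1{n!}\binom{n}{2}\sum_{\lambda}\chi^\lambda(\tau)\chi^\lambda(\sigma)$, and column orthogonality evaluates the inner sum to the centralizer order $|C_{S_n}(\tau)|=2(n-2)!$ when $\sigma$ is conjugate to $\tau$ and to $0$ otherwise. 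For a transposition this produces exactly $-N^{-n-1}$; for any other non-identity $\sigma$ it vanishes, which together with the previous paragraph proves (3); and for $\sigma=\mr{id}_n$ it also vanishes, since the sum becomes the regular character at $\tau\ne\mr{id}_n$.

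The remaining, and most delicate, point is the size of the first neglected term in (1) and (2): I must show the $N^{-n-1}$ coefficient vanishes when $\sigma=\mr{id}_n$ and the $N^{-n-2}$ coefficient vanishes when $\sigma$ is a transposition. Rather than evaluate higher content power sums, I would establish the functional equation
$$
\mr{Wg}^{U(-N)}_n(\sigma)=(-1)^n\,\sgn(\sigma)\,\mr{Wg}^{U(N)}_n(\sigma)
$$
for the rational function $\mr{Wg}$. It follows from three facts about the conjugate partition $\lambda'$: the contents negate, so $\prod_\lambda(-N+j-i)=(-1)^n\prod_{\lambda'}(N+j-i)$; the degrees satisfy $f^\lambda=f^{\lambda'}$; and $\chi^\lambda=\sgn\cdot\chi^{\lambda'}$. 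Writing $\mr{Wg}^{U(N)}_n(\sigma)=\sum_k w_k N^{-k}$, the functional equation forces $w_k=0$ unless $(-1)^k=(-1)^n\sgn(\sigma)$, so the nonzero powers $N^{-k}$ all share one parity — that of $n$ when $\sigma=\mr{id}_n$ and that of $n+1$ when $\sigma$ is a transposition. Hence nonzero coefficients occur in steps of two, giving the stated $O(N^{-n-2})$ in (1) and $O(N^{-n-3})$ in (2). I expect this parity argument to be the main obstacle: the two leading coefficients come out of the regular character and orthogonality almost immediately, but the gap-of-two phenomenon — which is exactly what sharpens the error terms — requires correctly matching up $N\mapsto-N$, conjugation $\lambda\mapsto\lambda'$, and the sign twist of the characters.
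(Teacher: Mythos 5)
Your argument is correct and complete; note that the paper itself offers no proof of this lemma --- it is quoted from Collins \cite{C} --- so there is nothing internal to compare against, and what you have written is a legitimate self-contained derivation. The three ingredients all check out: the $N^{-n}$ coefficient is $\tfrac1{n!}\sum_\lambda f^\lambda\chi^\lambda(\sigma)=\delta_{\sigma,\mathrm{id}}$ by the regular character; the $N^{-n-1}$ coefficient is $-\tfrac1{n!}\binom{n}{2}\sum_\lambda\chi^\lambda(\tau)\chi^\lambda(\sigma)$ via the Jucys--Murphy identification $e_1(\lambda)f^\lambda=\binom{n}{2}\chi^\lambda(\tau)$, and column orthogonality with $|C_{S_n}(\tau)|=2(n-2)!$ gives exactly $-1$ on the transposition class and $0$ elsewhere (this already disposes of (3) and of the error term in (1)); and the functional equation $\mathrm{Wg}^{U(-N)}_n(\sigma)=(-1)^n\sgn(\sigma)\mathrm{Wg}^{U(N)}_n(\sigma)$, which follows correctly from content negation under $\lambda\mapsto\lambda'$, $f^\lambda=f^{\lambda'}$ and $\chi^{\lambda'}=\sgn\cdot\chi^\lambda$, forces the Laurent coefficients $w_k$ to vanish off a single parity class, which is precisely the gap-of-two needed for the sharpened exponent in (2). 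Your handling of the length restriction $\ell(\lambda)\le N$ (work with $N\ge n$ and pass to the rational function) is the right way to make the $N\mapsto -N$ substitution legitimate. Two cosmetic remarks only: the vanishing of the $N^{-n-1}$ coefficient at $\sigma=\mathrm{id}_n$ is established twice (once by column orthogonality, once announced again before the parity argument), and the parity argument in fact yields more than claimed, namely that the full expansion of $\mathrm{Wg}^{U(N)}_n(\sigma)$ proceeds in steps of $N^{-2}$ for every $\sigma$, which is the form in which Collins states the result.
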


For  a finite set $I$ of positive integers,
denote by $S_I$ the symmetric group acting on $I$.
The {\it Young subgroup} $S_\lambda$ associated with $\lambda \vdash n$
is the subgroup of $S_n$ defined by
$$
S_\lambda= S_{\{1,2,\dots,\lambda_1\}} \times S_{\{\lambda_1+1, \lambda_1+2,\dots,
\lambda_1+\lambda_2\}} \times \cdots.
$$
In particular, $S_{(n)}=S_n$ and $S_{(1^n)}=\{ \mr{id}_n \}$.
The cardinality of $S_\lambda$ is $\lambda !$.

\begin{lem} \label{lem:sumYoungsubgroup}
Let $\mu \vdash n$ and let $S_\mu$ be its corresponding Young subgroup.
Then we have 
$$
\sum_{\sigma \in S_\mu} \mr{Wg}^{U(N)}_n(\sigma)= \frac{\mu!}{n!}
\sum_{\begin{subarray}{c} \lambda \vdash n \\ \ell(\lambda) \le N \end{subarray}} 
\frac{f^\lambda K_{\lambda \mu}}
{\prod_{(i,j) \in \lambda} (N+j-i)}.
$$
Here $K_{\lambda \mu}$ is the Kostka number (i.e. the number of semi-standard tableaux of
shape $\lambda$ and weight $\mu$; see \cite[Chapter I]{Mac}).
\end{lem}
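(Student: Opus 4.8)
The plan is to insert the character expansion of the Weingarten function and interchange the order of summation. Doing so gives
$$
\sum_{\sigma \in S_\mu} \mr{Wg}^{U(N)}_n(\sigma)
= \frac{1}{n!} \sum_{\begin{subarray}{c} \lambda \vdash n \\ \ell(\lambda) \le N \end{subarray}}
\frac{f^\lambda}{\prod_{(i,j) \in \lambda}(N+j-i)}
\left( \sum_{\sigma \in S_\mu} \chi^\lambda(\sigma) \right),
$$
so the whole problem reduces to evaluating the character sum $\sum_{\sigma \in S_\mu} \chi^\lambda(\sigma)$ in closed form. Comparing with the claimed right-hand side, it suffices to show that $\sum_{\sigma \in S_\mu} \chi^\lambda(\sigma) = \mu!\, K_{\lambda\mu}$ for each $\lambda \vdash n$.

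First I would rewrite this character sum as an inner product over the Young subgroup. Because $\chi^\lambda$ is real-valued and $|S_\mu| = \mu!$, we have
$$
\frac{1}{\mu!} \sum_{\sigma \in S_\mu} \chi^\lambda(\sigma)
= \langle \mathbf{1}_{S_\mu},\, \mathrm{Res}^{S_n}_{S_\mu} \chi^\lambda \rangle_{S_\mu},
$$
where $\mathbf{1}_{S_\mu}$ denotes the trivial character of $S_\mu$. By Frobenius reciprocity this coincides with $\langle \mathrm{Ind}^{S_n}_{S_\mu} \mathbf{1}_{S_\mu},\, \chi^\lambda \rangle_{S_n}$, that is, the multiplicity of the irreducible character $\chi^\lambda$ in the permutation character $\pi^\mu := \mathrm{Ind}^{S_n}_{S_\mu} \mathbf{1}_{S_\mu}$.

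The key step is then to identify this multiplicity with the Kostka number. By Young's rule (see \cite[Chapter I]{Mac}), the permutation character decomposes as $\pi^\mu = \sum_{\lambda \vdash n} K_{\lambda\mu}\, \chi^\lambda$, whence $\langle \pi^\mu, \chi^\lambda \rangle_{S_n} = K_{\lambda\mu}$. Chaining the three displays gives $\sum_{\sigma \in S_\mu} \chi^\lambda(\sigma) = \mu!\, K_{\lambda\mu}$, and substituting this back into the first display produces the asserted identity. I expect the only substantive ingredient to be Young's rule; everything else is formal manipulation. The truncation $\ell(\lambda) \le N$ requires no special care, since it appears identically on both sides and the per-$\lambda$ identity holds regardless of $N$.
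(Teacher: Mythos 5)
Your proposal is correct and follows essentially the same route as the paper: expand the Weingarten function, swap the sums, and evaluate $\sum_{\sigma \in S_\mu}\chi^\lambda(\sigma)=\mu!\,K_{\lambda\mu}$ via Frobenius reciprocity. The only cosmetic difference is that you invoke Young's rule for the permutation character directly, whereas the paper phrases the same fact through the characteristic map as $\langle s_\lambda, h_\mu\rangle = K_{\lambda\mu}$.
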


\begin{proof}
From the definition of the Weingarten function
we have
$$
\sum_{\sigma \in S_\mu} \mr{Wg}^{U(N)}_n(\sigma)=
\frac{|S_\mu|}{n!}
 \sum_{\begin{subarray}{c} \lambda \vdash n \\ \ell(\lambda) \le N \end{subarray}} 
\frac{f^\lambda}{\prod_{(i,j)\in\lambda} (N+j-i)} 
\langle \mr{res}_{S_n}^{S_\mu} \chi^\lambda, \xi_{\mu} \rangle_{S_\mu},
$$
where $\xi_\mu$ is the trivial character of $S_\mu$ and
$\langle \cdot, \cdot \rangle_{S_\mu}$ is the scalar product
on $\bC[S_\mu]$.
It follows from the Frobenius reciprocity that 
$\langle \mr{res}_{S_n}^{S_\mu} \chi^\lambda, \xi_{\mu} \rangle_{S_\mu}=
\langle \chi^\lambda, \mr{ind}_{S_\mu}^{S_n} \xi_{\mu} \rangle_{S_n}$,
which coincides with the scalar product $\langle s_\lambda, h_\mu \rangle$ 
in the algebra of symmetric functions (see \cite[Chapter I.7]{Mac}),
where $s_\lambda$ and $h_\lambda$ are the 
Schur and complete symmetric function, respectively.
Hence, since
$\langle s_\lambda, h_\mu \rangle = K_{\lambda \mu}$ (see \cite[Chapter I (5.14)]{Mac}),
we have 
$\langle \mr{res}_{S_n}^{S_\mu} \chi^\lambda, \xi_{\mu} \rangle_{S_\mu}=K_{\lambda \mu}$.
\end{proof}

\subsection{Some identities for unitary matrix integrals}

For a permutation $\sigma \in S_n$ and a sequence $\bm{i}=(i_1,\dots,i_n) \in [N]^n$,
we put
$$
\bm{i}^{\sigma}= (i_{\sigma(1)},\dots,i_{\sigma(n)}). 
$$
This gives the action of $S_n$ on $[N]^n$ from the right.

For two sequences $\bm{i}=(i_1,\dots,i_n)$
and $\bm{j}=(j_1,\dots,j_n)$ in $[N]^n$,
we write $\bm{i} \sim \bm{j}$ if
$\bm{j}$ is a permutation of $\bm{i}$,
so that there exists a permutation $\sigma \in S_n$ 
satisfying $\bm{j}=\bm{i}^\sigma$.
For example, $(2,3,2,1,3) \sim (3,3,2,2,1)$.

Given $U =(u_{ij})\in U(N)$ and 
four sequences 
$\bm{i}=(i_1,i_2,\dots,i_n)$, $\bm{j}=(j_1,\dots,j_n)$, 
$\bm{i}'=(i_1',\dots,i_m')$,
$\bm{j}'=(j_1,\dots,j_m')$ of indices in $[N]$,
we put
$$
U(\bm{i},\bm{j}|\bm{i}',\bm{j}')=
u_{i_1 j_1} u_{i_2 j_2} \cdots u_{i_n j_n} 
\overline{u_{i_1' j_1'} u_{i_2' j_2'} \cdots u_{i_m' j_m'}}.
$$

\begin{lem}[\cite{C}] \label{lem:WgFormula}
Let $U$ be an $N \times N$ Haar-distributed unitary matrix and let
$\bm{i}=(i_1,i_2,\dots,i_n)$, $\bm{j}=(j_1,\dots,j_n)$, 
$\bm{i}'=(i_1',\dots,i_m')$,
$\bm{j}'=(j_1,\dots,j_m')$ be four sequences of indices in $[N]$.
Then 
$\bE[U(\bm{i},\bm{j}|\bm{i}',\bm{j}')]$ vanishes unless
$n=m$, $\bm{i} \sim \bm{i}'$, and $\bm{j} \sim \bm{j}'$.
In this case, 
$$
\bE[U(\bm{i},\bm{j}|\bm{i}',\bm{j}')]= 
\sum_{\begin{subarray}{c} \sigma \in S_n \\ \bm{i}^\sigma=\bm{i}' \end{subarray}} 
\sum_{\begin{subarray}{c} \tau \in S_n \\ \bm{j}^\tau=\bm{j}' \end{subarray}} \mr{Wg}^{U(N)}_n(\sigma \tau^{-1}).
$$
\end{lem}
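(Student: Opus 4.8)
The plan is first to dispose of the vanishing statements using the translation invariance of Haar measure, and then to encode the whole family of moments as a single projection operator. For the vanishing, I would exploit that $e^{i\theta}U$, $DU$, and $UD$ all have the same distribution as $U$ for any scalar phase $e^{i\theta}$ and any diagonal unitary $D=\diag(e^{i\theta_1},\dots,e^{i\theta_N})$. Replacing $U$ by $e^{i\theta}U$ multiplies $U(\bm{i},\bm{j}|\bm{i}',\bm{j}')$ by $e^{i(n-m)\theta}$, so integrating over $\theta$ forces the expectation to vanish unless $n=m$. Replacing $U$ by $DU$ multiplies it by $\exp(i\sum_k\theta_{i_k}-i\sum_k\theta_{i_k'})$; integrating over the torus forces the two multisets $\{i_1,\dots,i_n\}$ and $\{i_1',\dots,i_n'\}$ to coincide, i.e.\ $\bm{i}\sim\bm{i}'$, and the substitution $U\mapsto UD$ forces $\bm{j}\sim\bm{j}'$ in the same way.

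For the main identity I would view the collection of all moments with $n=m$ as the matrix entries of the operator $P_N:=\bE[U^{\otimes n}\otimes\bar U^{\otimes n}]$ acting on $(\bC^N)^{\otimes n}\otimes\overline{(\bC^N)^{\otimes n}}$. Because $\rho(W):=W^{\otimes n}\otimes\bar W^{\otimes n}$ is a unitary representation of $U(N)$ and $P_N=\bE[\rho(U)]$ is the average of $\rho$ over the group, $P_N$ is the orthogonal projection onto the invariant subspace. Identifying $\overline{(\bC^N)^{\otimes n}}$ with the dual of $(\bC^N)^{\otimes n}$, this invariant subspace is $\mathrm{End}_{U(N)}((\bC^N)^{\otimes n})$, which by Schur--Weyl duality is spanned by the permutation operators $\{\rho_\sigma\}_{\sigma\in S_n}$ that permute the tensor legs.

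Writing the projection in terms of this spanning set gives $P_N=\sum_{\sigma,\tau\in S_n}\rho_\sigma\,(\mathrm{Gram}^{-1})_{\sigma\tau}\,\langle\rho_\tau,\,\cdot\,\rangle_{\mathrm{HS}}$, where the Gram matrix has entries $\langle\rho_\sigma,\rho_\tau\rangle_{\mathrm{HS}}=\tr(\rho_{\sigma^{-1}\tau})=N^{c(\sigma^{-1}\tau)}$ and $c(\pi)$ denotes the number of cycles of $\pi$. The crux is then to verify that the character formula defining $\mr{Wg}^{U(N)}_n$ is precisely this inverse Gram matrix. For this I would use the symmetric-function identity $N^{c(\sigma)}=\sum_{\lambda\vdash n}s_\lambda(1^N)\chi^\lambda(\sigma)$ together with $s_\lambda(1^N)=\frac{f^\lambda}{n!}\prod_{(i,j)\in\lambda}(N+j-i)$ (see \cite{Mac}), and the orthogonality relation $\chi^\lambda*\chi^\mu=\delta_{\lambda\mu}\frac{n!}{f^\lambda}\chi^\lambda$ for the convolution on class functions. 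A short computation then shows that the convolution of $\mr{Wg}^{U(N)}_n$ with the class function $\pi\mapsto N^{c(\pi)}$ equals $\frac{1}{n!}\sum_{\ell(\lambda)\le N}f^\lambda\chi^\lambda$, which is the group-algebra identity $\delta_{\mr{id}_n}$ when $N\ge n$; for $N<n$ the permutation operators become dependent and the same identity exhibits $\mr{Wg}^{U(N)}_n$ as the pseudo-inverse of the Gram matrix, still yielding the projection.

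Finally I would read off the matrix entry of $P_N$ between the standard basis vectors indexed by $(\bm{i},\bm{j})$ and $(\bm{i}',\bm{j}')$: each operator $\rho_\sigma$ contributes the indicator of the matching condition $\bm{i}^\sigma=\bm{i}'$ and each $\langle\rho_\tau,\,\cdot\,\rangle$ the indicator of $\bm{j}^\tau=\bm{j}'$, so only $\sigma$ with $\bm{i}^\sigma=\bm{i}'$ and $\tau$ with $\bm{j}^\tau=\bm{j}'$ survive, each weighted by $(\mathrm{Gram}^{-1})_{\sigma\tau}=\mr{Wg}^{U(N)}_n(\sigma\tau^{-1})$; this is exactly the asserted double sum. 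I expect the main obstacle to be the third step, namely verifying that the explicit character-theoretic definition of $\mr{Wg}^{U(N)}_n$ inverts the Gram matrix $[N^{c(\sigma^{-1}\tau)}]$, and in particular handling the degenerate regime $N<n$ where the $\rho_\sigma$ are linearly dependent; the symmetry arguments and the index-matching bookkeeping of the remaining steps are routine.
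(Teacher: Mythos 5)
The paper does not prove this lemma at all: it is quoted verbatim from the cited reference \cite{C} (see also \cite{CS}), and your argument is essentially the standard proof given there --- torus invariance for the vanishing, averaging $U^{\otimes n}\otimes\overline{U}^{\otimes n}$ to get the orthogonal projection onto the commutant, Schur--Weyl duality, and identification of $\mr{Wg}^{U(N)}_n$ as the (pseudo-)inverse of the Gram matrix $[N^{c(\sigma^{-1}\tau)}]$ via the expansion $N^{c(\sigma)}=\sum_\lambda s_\lambda(1^N)\chi^\lambda(\sigma)$. Your outline is correct, and you have rightly flagged the only delicate point (the linearly dependent regime $N<n$, where one checks that $\frac{1}{n!}\sum_{\ell(\lambda)\le N}f^\lambda\chi^\lambda$ acts as the identity on the image of $\bC[S_n]$ in $\mathrm{End}((\bC^N)^{\otimes n})$), which is exactly how the cited works handle it.
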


A sequence $\bm{i} \in [N]^n$ can be written as a rearrangement of a sequence of the form
$$
(\underbrace{l_1,l_1,\dots,l_1}_{\lambda_1}, \underbrace{l_2,\dots, l_2}_{\lambda_2}, \dots,
\underbrace{l_k,\dots,l_k}_{\lambda_k}), 
$$
where $l_1,\dots,l_k \in [N]$ are all distinct and $\lambda=(\lambda_1,\lambda_2,\dots,\lambda_k)$
is a partition of $n$.
Then we call $\lambda$ the {\it type} of 
$\bm{i}$.
For example, both
$(7,7,7,2,2,2,3)$ and $(2,5,4,2,5,5,2)$ are of type $(3,3,1) \vdash 7$.
Two sequences $\bm{i}$ and $\bm{j}$ have the same type
 if $\bm{i} \sim \bm{j}$.

\begin{prop} \label{CUE:row1}
Let $U=(u_{ij})_{1 \le i,j \le N}$ be a Haar-distributed unitary matrix from $U(N)$ and 
let $\bm{i} \in [N]^n$ be a sequence of type $\mu$. 
Put $(1^n):=(1,1,\dots,1)$ with $n$ times.
Then
$$
\bE[|u_{1 i_1} u_{1 i_2} \cdots u_{1 i_n}|^2]=
\bE[U((1^n),\bm{i}|(1^n),\bm{i})]=
\frac{\mu!}{N(N+1) \cdots (N+n-1)}.
$$
\end{prop}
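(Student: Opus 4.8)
The plan is to apply the Weingarten formula (Lemma \ref{lem:WgFormula}) and then reduce the resulting sum to the $\mu=(n)$ case of Lemma \ref{lem:sumYoungsubgroup}. First I note that the left equality is just the definition of $U(\bm{i},\bm{j}|\bm{i}',\bm{j}')$ with row indices $\bm{i}=\bm{i}'=(1^n)$ and column indices $\bm{j}=\bm{j}'=\bm{i}$. To evaluate the expectation I would invoke Lemma \ref{lem:WgFormula} with these four sequences, obtaining a double sum of $\mr{Wg}^{U(N)}_n(\sigma\tau^{-1})$ over those $\sigma$ with $(1^n)^\sigma=(1^n)$ and those $\tau$ with $\bm{i}^\tau=\bm{i}$.

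The key structural observation is that the two index constraints behave very differently. Since $(1^n)$ is a constant sequence, the condition $(1^n)^\sigma=(1^n)$ holds for every $\sigma\in S_n$, so the $\sigma$-sum is unrestricted. On the other hand, $\bm{i}^\tau=\bm{i}$ says precisely that $\tau$ lies in the stabilizer of $\bm{i}$ under the right action of $S_n$; because $\bm{i}$ has type $\mu$, this stabilizer is conjugate to the Young subgroup $S_\mu$ and in particular has cardinality $\mu!$. Next I would use the property $\mr{Wg}^{U(N)}_n(\sigma\tau)=\mr{Wg}^{U(N)}_n(\tau\sigma)$, equivalently that $\mr{Wg}^{U(N)}_n$ is a class function: for each fixed $\tau$ in the stabilizer, as $\sigma$ ranges over $S_n$ so does $\sigma\tau^{-1}$, hence $\sum_{\sigma\in S_n}\mr{Wg}^{U(N)}_n(\sigma\tau^{-1})=\sum_{\sigma\in S_n}\mr{Wg}^{U(N)}_n(\sigma)$ independently of $\tau$. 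This decouples the double sum and yields $\bE[U((1^n),\bm{i}|(1^n),\bm{i})]=\mu!\sum_{\sigma\in S_n}\mr{Wg}^{U(N)}_n(\sigma)$.

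It then remains to evaluate $\sum_{\sigma\in S_n}\mr{Wg}^{U(N)}_n(\sigma)$, which is exactly Lemma \ref{lem:sumYoungsubgroup} in the case $\mu=(n)$, where $S_{(n)}=S_n$. There the prefactor is $(n)!/n!=1$, and the surviving contribution is controlled by the Kostka number $K_{\lambda,(n)}$: a semistandard tableau of weight $(n)$ has all entries equal to $1$, which forces a single-row shape, so $K_{\lambda,(n)}$ is nonzero only for $\lambda=(n)$, where it equals $1$. With $f^{(n)}=1$ and $\prod_{(i,j)\in(n)}(N+j-i)=\prod_{j=1}^n(N+j-1)=N(N+1)\cdots(N+n-1)$, the sum collapses to $1/\bigl(N(N+1)\cdots(N+n-1)\bigr)$, and multiplying by $\mu!$ gives the claimed formula.

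I do not anticipate a serious obstacle here; the proposition is essentially a careful bookkeeping exercise in the Weingarten calculus. The one point that requires attention is keeping the two very different roles of the constraints straight---one constraint is vacuous while the other pins $\tau$ to a subgroup of order $\mu!$---and then recognizing that the remaining sum is precisely the single-row instance of Lemma \ref{lem:sumYoungsubgroup}.
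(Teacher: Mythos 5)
Your proposal is correct and follows essentially the same route as the paper: apply Lemma \ref{lem:WgFormula}, observe that the constraint on $\sigma$ is vacuous while $\tau$ is pinned to the stabilizer of $\bm{i}$ (of order $\mu!$), decouple the double sum using the conjugation-invariance of the Weingarten function, and evaluate $\sum_{\sigma\in S_n}\mr{Wg}^{U(N)}_n(\sigma)$ via the $\mu=(n)$ case of Lemma \ref{lem:sumYoungsubgroup}. The only cosmetic difference is that the paper first replaces $\bm{i}$ by the canonical representative $\bm{i}_\mu$ so that the stabilizer is literally the Young subgroup $S_\mu$, whereas you work with the conjugate stabilizer directly.
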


\begin{proof}
Let $\bm{i}_\mu$ be the sequence in $[N]^n$ given by
$$
\bm{i}_\mu=
(\underbrace{1,1,\dots,1}_{\mu_1}, \underbrace{2,2,\dots, 2}_{\mu_2}, \dots,
\underbrace{l,l,\dots,l}_{\mu_l}),  \qquad l=\ell(\mu).
$$
For each $\sigma \in S_n$,
$(\bm{i}_\mu)^{\sigma}=\bm{i}_\mu$ if and only if
$\sigma \in S_\mu$.
Therefore it follows from Lemma \ref{lem:WgFormula} that 
\begin{align*}
& \bE[U((1^n),\bm{i}|(1^n),\bm{i})] =
\bE[U((1^n),\bm{i}_\mu|(1^n),\bm{i}_\mu)] \\
=&\sum_{\sigma \in S_n}
\sum_{\tau \in S_\mu} \mr{Wg}^{U(N)}_n (\sigma \tau^{-1})=
|S_\mu| \sum_{\sigma \in S_n} \mr{Wg}^{U(N)}_n (\sigma).
\end{align*}
Since $K_{\lambda, (n)}=\delta_{\lambda,(n)}$ and $f^{(n)}=1$, 
Lemma \ref{lem:sumYoungsubgroup} at $\mu=(n)$ shows
$$
\sum_{\sigma \in S_n} \mr{Wg}^{U(N)}_n (\sigma)
= \frac{1}{N(N+1)(N+2) \cdots (N+n-1)}
$$
from which the result follows.
\end{proof}

We are not going to apply the following proposition.
However, it is important because it gives a joint moment for diagonal entries of
a CUE matrix $U$.

\begin{prop} \label{prop:CUErow2}
Let $U=(u_{ij})_{1 \le i,j \le N}$ be a Haar-distributed unitary matrix
from $U(N)$
 and 
let $\bm{i}=(i_1,\dots,i_n) \in [N]^n$ be a sequence of type $\mu$. Then
$$
\bE[|u_{i_1 i_1} u_{i_2 i_2} \cdots u_{i_n i_n}|^2]=
\bE[U(\bm{i},\bm{i}|\bm{i},\bm{i})]=
\frac{(\mu !)^2}{n!} \sum_{\begin{subarray}{c} \lambda \vdash n \\ \ell(\lambda) \le N \end{subarray}} 
\frac{f^\lambda K_{\lambda \mu}}
{\prod_{(i,j) \in \lambda} (N+j-i)}.
$$
\end{prop}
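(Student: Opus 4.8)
The plan is to reduce this diagonal-entry moment to a sum of Weingarten functions over a single Young subgroup, exactly mirroring the proof of Proposition~\ref{CUE:row1}, and then to invoke Lemma~\ref{lem:sumYoungsubgroup}. First I would note that $U(\bm{i},\bm{i}|\bm{i},\bm{i})=|u_{i_1i_1}\cdots u_{i_ni_n}|^2$, and that since the moment depends only on the common type $\mu$ of the index slots (permuting the labels $1,\dots,N$ leaves the Haar distribution invariant, and $\mr{Wg}^{U(N)}_n$ is a class function), it suffices to treat the canonical representative $\bm{i}_\mu=(\underbrace{1,\dots,1}_{\mu_1},\underbrace{2,\dots,2}_{\mu_2},\dots)$. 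The crucial structural point is that $\bm{i}_\mu$ now occupies \emph{all four} slots, so both permutation constraints in Lemma~\ref{lem:WgFormula} become $\bm{i}_\mu^{\,\sigma}=\bm{i}_\mu$ and $\bm{i}_\mu^{\,\tau}=\bm{i}_\mu$; as already used in the proof of Proposition~\ref{CUE:row1}, the stabilizer of $\bm{i}_\mu$ under the right $S_n$-action is precisely the Young subgroup $S_\mu$. This is the sole difference from Proposition~\ref{CUE:row1}, where the row indices $(1^n)$ had full stabilizer $S_n$ while only the column indices contributed $S_\mu$.

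Applying Lemma~\ref{lem:WgFormula} then yields
$$
\bE[U(\bm{i},\bm{i}|\bm{i},\bm{i})]=\sum_{\sigma\in S_\mu}\sum_{\tau\in S_\mu}\mr{Wg}^{U(N)}_n(\sigma\tau^{-1}).
$$
The next step is to collapse this double sum. For each fixed $\tau\in S_\mu$, the substitution $\rho=\sigma\tau^{-1}$ is a bijection of $S_\mu$ onto itself, because $S_\mu$ is a \emph{subgroup} and $\tau^{-1}\in S_\mu$; hence the inner sum equals $\sum_{\rho\in S_\mu}\mr{Wg}^{U(N)}_n(\rho)$, independently of $\tau$. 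Summing over the $|S_\mu|=\mu!$ choices of $\tau$ produces
$$
\bE[U(\bm{i},\bm{i}|\bm{i},\bm{i})]=\mu!\sum_{\rho\in S_\mu}\mr{Wg}^{U(N)}_n(\rho).
$$
Finally I would feed this single Young-subgroup sum into Lemma~\ref{lem:sumYoungsubgroup}, which rewrites it as $\tfrac{\mu!}{n!}\sum_{\lambda}\tfrac{f^\lambda K_{\lambda\mu}}{\prod_{(i,j)\in\lambda}(N+j-i)}$; multiplying by the prefactor $\mu!$ gives the claimed coefficient $(\mu!)^2/n!$.

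I expect no serious obstacle here: the whole argument is a short bookkeeping computation resting on the two lemmas. The only point demanding care is the collapse of the double sum, which is legitimate precisely because the common stabilizer $S_\mu$ is a group rather than a mere coset. This is exactly what forces the extra factor $\mu!$, and hence the square $(\mu!)^2$, that distinguishes this diagonal result from Proposition~\ref{CUE:row1}.
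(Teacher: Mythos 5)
Your proposal is correct and follows essentially the same route as the paper: reduce to the canonical representative $\bm{i}_\mu$, apply Lemma \ref{lem:WgFormula} with both stabilizers equal to $S_\mu$, collapse the double sum over the subgroup to the factor $|S_\mu|=\mu!$, and finish with Lemma \ref{lem:sumYoungsubgroup}. The paper's proof is just a terser version of the same computation.
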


\begin{proof}
The discussion similar to the previous proposition gives
$$
\bE[U(\bm{i},\bm{i}|\bm{i},\bm{i})] =
\bE[U(\bm{i}_\mu,\bm{i}_\mu|\bm{i}_\mu,\bm{i}_\mu)]
=\sum_{\sigma \in S_\mu} \sum_{\tau \in S_\mu} \mr{Wg}^{U(N)}_n(\sigma \tau^{-1})
= |S_\mu|\sum_{\sigma \in S_\mu}\mr{Wg}^{U(N)}_n(\sigma),
$$
from which the statement follows by Lemma \ref{lem:sumYoungsubgroup}.
\end{proof}

\section{Proofs of main theorems}

\subsection{General properties}

We use $X \stackrel{\mathrm{d}}{=} Y$ to denote that
random variables (or random matrices) $X$ and $Y$ have the same distribution.
Let $V=(v_{ij})_{1 \le i,j \le N}$ be a COE matrix.
Let $M(\pi)=(\delta_{i,\pi(j)})_{1 \le i,j \le N}$ be the permutation matrix
associated to $\pi \in S_{N}$.
Since $V \stackrel{\mathrm{d}}{=} \trans{M(\pi)} V M(\pi)$
by the invariant property for the COE, we have
$V \stackrel{\mathrm{d}}{=} (v_{\pi(i),\pi(j)})_{1 \le i,j \le N}$.
In particular, if $1 \le i \not=j \le N$, then
$v_{ii} \stackrel{\mathrm{d}}{=} v_{11}$ and $v_{ij} \stackrel{\mathrm{d}}{=} v_{12}$.

Using a Haar-distributed unitary matrix $U=(u_{ij})_{1 \le i,j \le N}$,
we can write $V=\trans{U} U$.
Let  $\bm{j}=(j_1,j_2,\dots,j_{2n}) \in [N]^{2n}$ and $\bm{j}'=(j_1',j_2',\dots,j_{2m}') \in 
[N]^{2m}$.
Since $v_{ij}=\sum_k u_{ki} u_{kj}$,
we have
$$
\bE[v_{j_1, j_2} v_{j_3, j_4} \cdots v_{j_{2n-1}, j_{2n}} 
\overline{v_{j_1', j_2'} v_{j_3', j_4'} \cdots v_{j_{2m-1}', j_{2m}'} }]
=\sum_{\bm{k} \in [N]^n} \sum_{\bm{k}' \in [N]^m}
\bE[U(\tilde{\bm{k}}, \bm{j}| \tilde{\bm{k}}',\bm{j}')],
$$
where 
\begin{align*}
\bm{k}=& (k_1,k_2,\dots,k_n), & \bm{k}'=& (k_1',k_2',\dots,k_m'), \\
\tilde{\bm{k}}=&(k_1,k_1,k_2,k_2,\dots,k_n,k_n), &  
\tilde{\bm{k}}'=&(k_1',k_1',k_2',k_2',\dots,k_m',k_m') .
\end{align*}
Hence, by Lemma \ref{lem:WgFormula} we have

\begin{lem} \label{lem:COEcondition}
$\bE[v_{j_1, j_2} v_{j_3, j_4} \cdots v_{j_{2n-1}, j_{2n}} 
\overline{v_{j_1', j_2'} v_{j_3', j_4'} \cdots v_{j_{2m-1}', j_{2m}'} }]$ vanishes
unless $n=m$ and $\bm{j} \sim \bm{j}'$.
\end{lem}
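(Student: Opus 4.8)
The statement is an immediate consequence of the formula derived just before it, combined with the vanishing criterion of Lemma \ref{lem:WgFormula}. The plan is to transfer the known vanishing conditions for CUE joint moments through the identity $v_{ij}=\sum_k u_{ki}u_{kj}$, keeping careful track of how the index sequences for $U$ are assembled from those for $V$.

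First I would recall the expansion already established, namely
$$
\bE[v_{j_1,j_2}\cdots v_{j_{2n-1},j_{2n}}\overline{v_{j_1',j_2'}\cdots v_{j_{2m-1}',j_{2m}'}}]
=\sum_{\bm{k}\in[N]^n}\sum_{\bm{k}'\in[N]^m}\bE[U(\tilde{\bm{k}},\bm{j}\,|\,\tilde{\bm{k}}',\bm{j}')].
$$
By Lemma \ref{lem:WgFormula}, each summand $\bE[U(\tilde{\bm{k}},\bm{j}\,|\,\tilde{\bm{k}}',\bm{j}')]$ vanishes unless the two upper sequences have equal length, the two lower sequences have equal length, $\tilde{\bm{k}}\sim\tilde{\bm{k}}'$, and $\bm{j}\sim\bm{j}'$. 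The second column-sequences are $\bm{j}$ of length $2n$ and $\bm{j}'$ of length $2m$; the row-sequences are $\tilde{\bm{k}}$ of length $2n$ and $\tilde{\bm{k}}'$ of length $2m$. Thus the length-matching condition forces $2n=2m$, i.e.\ $n=m$, and the condition $\bm{j}\sim\bm{j}'$ must hold for at least one summand to be nonzero.

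Therefore, if either $n\neq m$ or $\bm{j}\not\sim\bm{j}'$, every term of the double sum vanishes by Lemma \ref{lem:WgFormula}, and hence so does the left-hand side. This is the desired conclusion.

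There is no real obstacle here; the only point requiring mild care is to observe that the condition $\bm{j}\sim\bm{j}'$ appears as a hypothesis in Lemma \ref{lem:WgFormula} and does not depend on the summation variables $\bm{k},\bm{k}'$, so it factors cleanly out of the sum and survives as a necessary condition for the moment to be nonzero. (By contrast, the row condition $\tilde{\bm{k}}\sim\tilde{\bm{k}}'$ does depend on $\bm{k},\bm{k}'$ and cannot be read off as a constraint on the $V$-indices, which is why only $n=m$ and $\bm{j}\sim\bm{j}'$ appear in the statement.)
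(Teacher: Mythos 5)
Your argument is correct and is essentially identical to the paper's: the paper derives the same expansion via $v_{ij}=\sum_k u_{ki}u_{kj}$ and then invokes Lemma \ref{lem:WgFormula} termwise, noting that the condition $\bm{j}\sim\bm{j}'$ (and the length condition forcing $n=m$) is independent of the summation indices $\bm{k},\bm{k}'$. Your closing remark about why the row condition does not appear in the statement is a helpful clarification but not a departure from the paper's route.
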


We now suppose $\bm{j} \sim \bm{j}'$.
By Lemma \ref{lem:WgFormula} again, 
$\bE[U(\tilde{\bm{k}}, \bm{j}; \tilde{\bm{k}}',\bm{j}')]$
vanishes unless $\tilde{\bm{k}} \sim \tilde{\bm{k}'}$,
so that unless $\bm{k} \sim \bm{k}'$. Therefore
\begin{align}
&\bE[v_{j_1, j_2} v_{j_3, j_4} \cdots v_{j_{2n-1}, j_{2n}} 
\overline{v_{j_1', j_2'} v_{j_3', j_4'} \cdots v_{j_{2n-1}', j_{2n}'} }] 
\label{eq:COEexpand1} \\
=&\sum_{\bm{k} \in [N]^n} 
\sum_{
\begin{subarray}{c} \bm{k}' \in [N]^n \\ \bm{k}' \sim \bm{k} \end{subarray}}
\bE[U(\tilde{\bm{k}}, \bm{j}| \tilde{\bm{k}}',\bm{j}')] \notag \\
=& \sum_{\bm{k} \in [N]^n} 
\sum_{
\begin{subarray}{c} \bm{k}' \in [N]^n \\ \bm{k}' \sim \bm{k} \end{subarray}}
\sum_{\begin{subarray}{c} \sigma \in S_{2n} \\ \tilde{\bm{k}}' = 
(\tilde{\bm{k}})^{\sigma} \end{subarray}}
\sum_{\begin{subarray}{c} \tau \in S_{2n} \\ \bm{j}' = \bm{j}^{\tau} \end{subarray}}
\mr{Wg}^{U(N)}_{2n}(\sigma \tau^{-1}). \notag
\end{align}

\subsection{Diagonal entry}

\begin{thm} (see Theorem \ref{thm:COE-diagonal-moment}.)
Let $V=(v_{ij})_{1 \le i,j \le N}$ be an $N \times N$ COE matrix.
For positive integers $i,N,n$ with $1 \le i \le N$,
$$
\bE[|v_{ii}|^{2n}]= \frac{2^n n!}{(N+1)(N+3) \cdots (N+2n-1)}.
$$
\end{thm}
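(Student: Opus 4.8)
The plan is to reduce to the case $i=1$ and then feed the expansion \eqref{eq:COEexpand1} into the unitary Weingarten calculus. By the permutation invariance established in Section 3.1 we have $v_{ii}\stackrel{\mathrm{d}}{=}v_{11}$, so it suffices to compute $\bE[|v_{11}|^{2n}]$. Writing $V=\trans{U}U$ gives $v_{11}=\sum_k u_{k1}^2$, so $|v_{11}|^{2n}$ is exactly a moment of the form treated in \eqref{eq:COEexpand1} with the index sequence $\bm{j}=(1,1,\dots,1)=(1^{2n})$ and $\bm{j}'=\bm{j}$. Since $\bm{j}^\tau=\bm{j}'$ for \emph{every} $\tau\in S_{2n}$, the inner sum over $\tau$ runs over the whole group.

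The key simplification is that this $\tau$-sum factors out. For each fixed $\sigma$ the substitution $\rho=\sigma\tau^{-1}$ gives $\sum_{\tau\in S_{2n}}\mr{Wg}^{U(N)}_{2n}(\sigma\tau^{-1})=\sum_{\rho\in S_{2n}}\mr{Wg}^{U(N)}_{2n}(\rho)$, which is independent of $\sigma$ and, by the identity established in the proof of Proposition \ref{CUE:row1} (i.e.\ Lemma \ref{lem:sumYoungsubgroup} with $\mu=(2n)$), equals $\frac{1}{N(N+1)\cdots(N+2n-1)}$. Calling a sequence $\bm{w}$ \emph{doubled} if $w_{2j-1}=w_{2j}$ for all $j$, and noting that for each admissible $\sigma$ there is exactly one $\bm{k}'$ with $\tilde{\bm{k}}'=(\tilde{\bm{k}})^\sigma$, and that it automatically satisfies $\bm{k}'\sim\bm{k}$, I obtain
\begin{equation*}
\bE[|v_{11}|^{2n}]=\frac{1}{N(N+1)\cdots(N+2n-1)}\cdot C,\qquad
C:=\sum_{\bm{k}\in[N]^n}\#\{\sigma\in S_{2n}\ :\ (\tilde{\bm{k}})^\sigma\text{ is doubled}\}.
\end{equation*}

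It remains to evaluate the purely combinatorial quantity $C$, which I expect to be the main obstacle. I would organize the count by the type $\mu\vdash n$ of $\bm{k}$. For $\bm{k}$ of type $\mu$, a short argument shows $\#\{\sigma:(\tilde{\bm{k}})^\sigma\text{ doubled}\}=\frac{n!}{\mu!}\prod_i(2\mu_i)!$, since a doubled target is the same datum as a sequence $\bm{k}'\sim\bm{k}$ (there are $n!/\mu!$ of these), and each such target is hit by exactly $\prod_i(2\mu_i)!$ permutations. Summing over the $N_\mu:=\frac{N!}{(N-\ell(\mu))!\,\prod_{j\ge1}m_j(\mu)!}$ multisets of type $\mu$, each carrying $n!/\mu!$ sequences, yields
\begin{equation*}
C=\sum_{\mu\vdash n}N_\mu\left(\frac{n!}{\mu!}\right)^2\prod_i(2\mu_i)!.
\end{equation*}

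To close the computation I would read this sum off a generating function. Since $\sum_{m\ge0}\binom{2m}{m}z^m=(1-4z)^{-1/2}$, the exponential generating function for a single color is $\phi(x,y)=\sum_{m\ge0}\binom{2m}{m}(xy)^m=(1-4xy)^{-1/2}$; taking the product over the $N$ colors gives $C=(n!)^2\,[x^n y^n]\,(1-4xy)^{-N/2}=2^n n!\prod_{j=0}^{n-1}(N+2j)$. Substituting this into the displayed expression and cancelling the even-offset factors $N(N+2)\cdots(N+2n-2)$ against the denominator leaves $\bE[|v_{11}|^{2n}]=\frac{2^n n!}{(N+1)(N+3)\cdots(N+2n-1)}$, as claimed. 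I anticipate that the only genuinely delicate points are the type-decomposition bookkeeping and the verification that $\bm{k}'\sim\bm{k}$ holds automatically; the generating-function evaluation is then routine.
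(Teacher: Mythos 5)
Your argument is correct. The first half---reducing to $i=1$, expanding via \eqref{eq:COEexpand1}, noting that the $\tau$-sum runs over all of $S_{2n}$ and factors out as $\sum_{\rho\in S_{2n}}\mr{Wg}^{U(N)}_{2n}(\rho)=\frac{1}{N(N+1)\cdots(N+2n-1)}$, and reducing everything to a purely combinatorial count---is the same as the paper's, up to bookkeeping: your quantity $C$ equals $\sum_{\bm{k}}\frac{n!}{\mu!}(2\mu)!$, which is exactly $2^n n!$ times the left-hand side of \eqref{eq:numerator} (the paper reaches the same point by routing the inner expectation through Proposition \ref{CUE:row1}). Where you genuinely diverge is in evaluating that count. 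The paper devotes Subsection 3.3 to it: Lemma \ref{lem2} rewrites $\prod_j(2\mu_j-1)!!$ as a sum over pairings $\mf{m}\in\mcal{M}([n])$, and Lemma \ref{lem1} proves $\sum_{\mf{m}}N^{\kappa(\mf{m})}=N(N+2)\cdots(N+2n-2)$ by an inductive bijection on pairings. You instead observe that
$$
\frac{C}{(n!)^2}=\sum_{f}\ \prod_{c=1}^{N}\binom{2f(c)}{f(c)}=[z^n]\,(1-4z)^{-N/2}=\frac{2^n}{n!}\prod_{j=0}^{n-1}(N+2j),
$$
the sum being over compositions $f$ of $n$ into $N$ nonnegative parts; this is the generalized binomial theorem and nothing more (your two-variable ``EGF'' $\phi(x,y)=(1-4xy)^{-1/2}$ is consistent with the $(n!)^2$ normalization, though a single variable $z=xy$ suffices). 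Your route is shorter and entirely routine, at the cost of the combinatorial interpretation---connected components of the graphs $\Gamma(\mf{m})$---that the paper's Lemmas \ref{lem1} and \ref{lem2} make visible. Both arguments close identically by cancelling $N(N+2)\cdots(N+2n-2)$ against the even-offset factors of $N(N+1)\cdots(N+2n-1)$.
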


\begin{proof}
The equation \eqref{eq:COEexpand1} gives
$$
\bE[|v_{ii}|^{2n}]=
\sum_{\bm{k} \in [N]^n} 
\sum_{
\begin{subarray}{c} \bm{k}' \in [N]^n \\ \bm{k}' \sim \bm{k} \end{subarray}}
\bE[U(\tilde{\bm{k}}, (i^{2n})| \tilde{\bm{k}}',(i^{2n}))]
=\sum_{\bm{k} \in [N]^n} 
\bE[U(\tilde{\bm{k}}, (i^{2n})| \tilde{\bm{k}},(i^{2n}))]
\sum_{
\begin{subarray}{c} \bm{k}' \in [N]^n \\ \bm{k}' \sim \bm{k} \end{subarray}}
1.
$$
Given a sequence $\bm{k}$ of type $\mu$,
the number of sequences $\bm{k}'$ in $[N]^n$ with $\bm{k} \sim 
\bm{k}'$ is $\frac{n!}{\mu!}$, and
the type of  
$\tilde{\bm{k}}$
is $2\mu$.
Hence,
by Proposition \ref{CUE:row1},
 the last equation equals
\begin{align*}
& \frac{n!}{N(N+1)(N+2) \cdots (N+2n-1)}
 \sum_{\mu \vdash n} \sum_{\begin{subarray}{c} \bm{k} \in [N]^{n} \\ \text{(type of $\bm{k}$)$=\mu$} \end{subarray}}
\frac{(2\mu)!}{\mu!} \\
=&
\frac{2^n n!}{N(N+1)(N+2) \cdots (N+2n-1)}
 \sum_{\mu \vdash n} \sum_{\begin{subarray}{c} \bm{k} \in [N]^{n} \\ \text{(type of $\bm{k}$)$=\mu$} \end{subarray}}
\prod_{j=1}^{\ell(\mu)} (2\mu_j-1)!!. 
\end{align*}
To end the proof of the theorem,
it is enough to show the identity
\begin{equation} \label{eq:numerator}
\sum_{\mu \vdash n} 
\sum_{\begin{subarray}{c} \bm{k} \in [N]^{n} \\ \text{(type of $\bm{k}$)$=\mu$} \end{subarray}}
\prod_{j=1}^{\ell(\mu)} (2\mu_j-1)!!
=N(N+2)(N+4) \cdots (N+2n-2),
\end{equation}
which will be proved in the next subsection.
\end{proof}

\begin{cor}
Let $V^{(N)}=(v_{ij}^{(N)})$, $N \ge 1$, be a sequence of COE matrices.
Fix positive integers $i$ and $n$. As $N \to \infty$, 
$$
\bE[|v_{ii}^{(N)}|^{2n}] = 2^n n! (N^{-n} -n^2 N^{-n-1}) +O(N^{-n-2}).
$$
\end{cor}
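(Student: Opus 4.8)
The plan is to derive this corollary directly from the closed formula of Theorem~\ref{thm:COE-diagonal-moment}, since no new probabilistic input is needed: the statement is purely an asymptotic expansion of an explicit rational function of $N$. First I would write the denominator as
$$
(N+1)(N+3)\cdots(N+2n-1) = N^n \prod_{k=1}^{n}\left(1 + \frac{2k-1}{N}\right),
$$
so that Theorem~\ref{thm:COE-diagonal-moment} gives
$$
\bE[|v_{ii}^{(N)}|^{2n}] = 2^n n! \, N^{-n} \prod_{k=1}^{n}\left(1 + \frac{2k-1}{N}\right)^{-1}.
$$

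Next I would expand the reciprocal product to first order in $N^{-1}$. Using $\prod_{k=1}^n (1+a_k/N)^{-1} = 1 - N^{-1}\sum_{k=1}^n a_k + O(N^{-2})$ with $a_k = 2k-1$, the one arithmetic fact that enters is that the sum of the first $n$ odd integers equals $n^2$, i.e.\ $\sum_{k=1}^n (2k-1) = n^2$. This produces
$$
\prod_{k=1}^{n}\left(1 + \frac{2k-1}{N}\right)^{-1} = 1 - n^2 N^{-1} + O(N^{-2}),
$$
and multiplying through by $2^n n! \, N^{-n}$ yields exactly
$$
\bE[|v_{ii}^{(N)}|^{2n}] = 2^n n!\left(N^{-n} - n^2 N^{-n-1}\right) + O(N^{-n-2}),
$$
as claimed.

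There is no serious obstacle here; the result is a routine geometric/Taylor expansion of the exact moment formula, and the coefficient $n^2$ at subleading order is dictated entirely by the identity $\sum_{k=1}^n(2k-1)=n^2$. The only point requiring a moment of care is the bookkeeping of the remainder: one should confirm that the second-order contributions from the product expansion, including all cross terms, are uniformly $O(N^{-2})$ for fixed $n$, so that after multiplication by $N^{-n}$ they land at order $N^{-n-2}$. Since $n$ is fixed and the product has only finitely many factors, this is immediate.
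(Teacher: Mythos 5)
Your proof is correct and is precisely the argument the paper intends: the corollary is stated without proof as an immediate consequence of Theorem \ref{thm:COE-diagonal-moment}, obtained by expanding the reciprocal of $(N+1)(N+3)\cdots(N+2n-1)=N^n+n^2N^{n-1}+O(N^{n-2})$ using $\sum_{k=1}^n(2k-1)=n^2$. Nothing further is needed.
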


\begin{cor}(see Theorem \ref{cor:limitGaussian}.) \label{cor:limitdist1}
Let $V^{(N)}=(v_{ij}^{(N)})$, $N \ge 1$, be a sequence of COE matrices. Fix 
a positive integer $i$.
As $N \to \infty$,
the random variable $\sqrt{N/2} v_{ii}^{(N)}$ converges to
a standard complex Gaussian random variable in distribution.
\end{cor}

\begin{proof}
Recall that a random variable $Z$ distributed to
a standard complex normal distribution satisfies 
$\bE [Z^n \overline{Z}^m]= \delta_{n,m} n!$ with any positive integers $n,m$.
Consider a random variable $Z_N=\sqrt{N/2} v_{ii}^{(N)}$.
From the last corollary and Lemma \ref{lem:COEcondition} we have 
$\bE ((Z_N)^m (\overline{Z_N})^n)=0$ ($m \not= n$) and 
$\lim_{N \to \infty} \bE ((Z_N)^n (\overline{Z_N})^n) = n!$.
This implies that random variables$\{Z_N\}_{N \ge 1}$ converge in distribution 
to  a standard complex normal variable (see, e.g., the proof of Theorem 3.1 in \cite{PR}).
\end{proof}

\subsection{Combinatorial lemmas}

The purpose in this subsection is to give a  remaining proof of  
 \eqref{eq:numerator}.

Given a non-empty finite set $I$ of positive integers,
denote by $\mcal{M}(I)$ the set of all unordered pairings on
$\bigsqcup_{i \in I} \{2i-1,2i\}$.
For example, $\mcal{M}(\{1,3\})$ consists of three pairings
$$
\{\{1,2\},\{5,6\}\}, \qquad \{\{1,5\},\{2,6\}\}, \qquad
\{\{1,6\},\{2,5\}\}. 
$$

Given $\mf{m} \in \mcal{M}([n])$, we attach a graph $\Gamma(\mf{m})$ with vertices
$1,2,\dots,2n $ and with the edge set
$$
\big\{ \{2k-1,2k\} \ | \ k \in [n] \big\} \sqcup  \mf{m}.
$$
Denote by $\kappa(\mf{m})$ the number of 
connected components of $\Gamma(\mf{m})$.

\begin{lem} \label{lem1}
For positive integers $n,N$, we have
$$
\sum_{\mf{m} \in \mcal{M}([n])} N^{\kappa(\mf{m})}= N(N+2)(N+4) \cdots (N+2n-2).
$$
\end{lem}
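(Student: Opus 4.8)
The plan is to prove the identity by induction on $n$, by establishing the one-step recursion
\[
\sum_{\mf{m} \in \mcal{M}([n])} N^{\kappa(\mf{m})}
= (N+2n-2) \sum_{\mf{m}' \in \mcal{M}([n-1])} N^{\kappa(\mf{m}')},
\]
which together with the base case $n=1$ (the unique pairing $\{\{1,2\}\}$ gives a single doubled edge, so $\kappa=1$ and the left side equals $N$) yields $N(N+2)\cdots(N+2n-2)$. First I would reinterpret the quantity: writing $\mf{e}=\big\{\{2k-1,2k\}\mid k\in[n]\big\}$ for the fixed matching, the graph $\Gamma(\mf{m})$ is the union $\mf{e}\cup\mf{m}$ of two perfect matchings of $\{1,\dots,2n\}$, hence a disjoint union of even cycles (a coincident pair of edges being a $2$-cycle), and $\kappa(\mf{m})$ simply counts these cycles.

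The heart of the argument is a surgery that compares a pairing $\mf{m}\in\mcal{M}([n])$ with one on $\{1,\dots,2n-2\}$ by examining the partners of the two new points $2n-1$ and $2n$ under $\mf{m}$. There are two cases. In \emph{Case A}, $\mf{m}$ pairs $2n-1$ with $2n$; then $\mf{m}$ restricts to a pairing $\mf{m}'\in\mcal{M}([n-1])$, and the doubled edge $\{2n-1,2n\}$ forms its own isolated $2$-cycle, so $\kappa(\mf{m})=\kappa(\mf{m}')+1$. In \emph{Case B}, $2n-1$ is paired to some $a\in\{1,\dots,2n-2\}$ and $2n$ to some distinct $b\in\{1,\dots,2n-2\}$; I form $\mf{m}'\in\mcal{M}([n-1])$ by deleting $2n-1,2n$ and inserting the new pair $\{a,b\}$. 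This map is a bijection onto its image: given $\mf{m}'$, each of its $n-1$ pairs $\{a,b\}$ can be reopened in exactly two ways ($2n-1\sim a,\ 2n\sim b$ or $2n-1\sim b,\ 2n\sim a$), giving $2(n-1)$ preimages, while Case A contributes the one remaining pairing, for a total of $2n-1$ as required.

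It remains to track $\kappa$ in Case B, which I expect to be the only delicate point. In $\Gamma(\mf{m})$ the two matching edges $\{2n-1,a\}$, $\{2n,b\}$ together with the fixed edge $\{2n-1,2n\}$ form a path $a-(2n-1)-(2n)-b$, and passing to $\mf{m}'$ contracts this path to the single matching edge $\{a,b\}$, leaving all fixed edges on $\{1,\dots,2n-2\}$ untouched. Contracting a path of internal degree-two vertices to an edge never changes the number of connected components, so $\kappa(\mf{m})=\kappa(\mf{m}')$. The one configuration deserving a careful check is when $\{a,b\}$ is simultaneously a fixed edge, so that the relevant component of $\Gamma(\mf{m}')$ is a $2$-cycle; one verifies directly that in $\Gamma(\mf{m})$ the four vertices $a,b,2n-1,2n$ then form a single $4$-cycle, so again $\kappa$ is preserved. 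Summing the weights, Case A contributes $N\cdot N^{\kappa(\mf{m}')}$ and Case B contributes $2(n-1)\,N^{\kappa(\mf{m}')}$ for each $\mf{m}'$, giving the factor $N+2(n-1)=N+2n-2$ and completing the induction.
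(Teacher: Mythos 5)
Your proof is correct and follows essentially the same route as the paper: induction on $n$ via a surgery that removes the last pair of points, reconnects their partners, and tracks that $\kappa$ increases by one exactly when the two new points are paired with each other and is unchanged otherwise, yielding the factor $N+2n-2$. The only difference is presentational (the paper sets up explicit bijections $\mcal{M}_k([n+1])\to\mcal{M}([n])$ indexed by the partner $k$ of the largest element, whereas you count fibers of a single forgetful map), and your cycle-structure justification of the $\kappa$ bookkeeping is a welcome elaboration of what the paper calls ``easy to see.''
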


\begin{proof}
We show it by induction on $n$.
When $n=1$, we have $\sum_{\mf{m} \in \mcal{M}([1])} N^{\kappa(\mf{m})}=N$,
so that the claim holds true.

Suppose the statement at $n$ holds true and 
consider the $n+1$ case.
For $k=1,2,\dots,2n+1$, we set 
$$
\mcal{M}_k([n+1])= \{\mf{m} \in \mcal{M}([n+1]) \ | \ \{k,2n+2 \} \in \mf{m}\}.
$$
We construct a bijection $\mcal{M}_k([n+1]) \ni \mf{m} \mapsto \mf{m}' \in \mcal{M}([n])$ as follows.
If $k=2n+1$ and $\mf{m} \in \mcal{M}_k([n+1])$, we define $\mf{m}'$ to be 
the pairing obtained by removing $\{2n+1,2n+2\}$ from $\mf{m}$.
If $1 \le k \le 2n$ and $\mf{m} \in \mcal{M}_k([n+1])$, we define $\mf{m}'$ to be 
the pairing obtained by removing $\{k,2n+2\}$ and a pair $\{i,2n+1\}$ (with some $i$)
from $\mf{m}$ and by adding
$\{i,k\}$.
It is easy to see that the map $\mcal{M}_k([n+1]) \ni \mf{m} \mapsto \mf{m}' \in \mcal{M}([n])$
is bijective
 and that
$$
\kappa(\mf{m}) = \begin{cases}\kappa(\mf{m}')+1 & \text{if $\mf{m} \in \mcal{M}_{2n+1}([n+1])$},\\
\kappa(\mf{m}') & \text{if $\mf{m} \in \mcal{M}_{k}([n+1])$ and $k=1,2,\dots,2n$}.
\end{cases} 
$$
Hence it follows from the induction assumption that
\begin{align*}
\sum_{\mf{m} \in \mcal{M}([n+1])} N^{\kappa(\mf{m})} =& 
\sum_{\mf{m} \in \mcal{M}_{2n+1}([n+1])} N^{\kappa(\mf{m})}+
\sum_{k=1}^{2n} \sum_{\mf{m} \in \mcal{M}_k([n+1])} N^{\kappa(\mf{m})} \\
=&\sum_{\mf{n} \in \mcal{M}([n])} N^{\kappa(\mf{n})+1} +
\sum_{k=1}^{2n} \sum_{\mf{n} \in \mcal{M}([n])} N^{\kappa(\mf{n})}   \\
=& N(N+1) \cdots (N+2n-2) (N+2n),
\end{align*}
from which the $n+1$ case follows.
\end{proof}

\begin{lem} \label{lem2}
Let $\mu$ be a partition of $n$ and 
let $(i_1,\dots,i_{2n}) \in [N]^{2n}$ be a sequence of type 
$2\mu=(2\mu_1,2\mu_2,\dots)$.
Then 
$$
\sum_{\mf{m} \in \mcal{M}([n])} 
\(\prod_{\{p,q\} \in \mf{m}} \delta_{i_{p},i_{q}}\)
= \prod_{j=1}^{\ell(\mu)} (2\mu_j-1)!!.
$$
\end{lem}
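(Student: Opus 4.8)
The plan is to read the left-hand side as a counting quantity. For a fixed matching $\mf{m} \in \mcal{M}([n])$ the product $\prod_{\{p,q\} \in \mf{m}} \delta_{i_p,i_q}$ is an indicator: it equals $1$ exactly when every pair $\{p,q\}$ of $\mf{m}$ joins two positions carrying the same value $i_p = i_q$, and it equals $0$ otherwise. Hence the sum counts precisely those perfect matchings of $\{1,2,\dots,2n\}$ all of whose pairs are monochromatic, where I regard each position $p$ as colored by its value $i_p$.

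First I would record the blocks determined by the type. Since $(i_1,\dots,i_{2n})$ is of type $2\mu=(2\mu_1,2\mu_2,\dots)$, the $2n$ positions partition into $\ell(\mu)$ color classes $C_1,\dots,C_{\ell(\mu)}$, where $C_j$ consists of the positions sharing the value of multiplicity $2\mu_j$; in particular $|C_j|=2\mu_j$. By construction a pair $\{p,q\}$ satisfies $i_p=i_q$ if and only if $p$ and $q$ lie in the same block $C_j$.

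Next I would observe that a matching $\mf{m}$ contributes $1$ if and only if it restricts to a perfect matching on each block $C_j$ separately. Indeed, if every pair of $\mf{m}$ is monochromatic, then each pair is forced to stay inside a single block, and since the blocks are disjoint and cover all $2n$ positions, $\mf{m}$ decomposes uniquely as a disjoint union of one perfect matching per block; conversely, any such disjoint union is an admissible $\mf{m}$. Because each $|C_j|=2\mu_j$ is even, a perfect matching of $C_j$ exists, and the number of perfect matchings of a set of size $2\mu_j$ is the double factorial $(2\mu_j-1)!!$.

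Finally, by the multiplication principle over the disjoint blocks, the number of admissible matchings is the product $\prod_{j=1}^{\ell(\mu)}(2\mu_j-1)!!$, which is the asserted value. I do not expect any serious obstacle here; the single point that must be checked with care is that each color class has \emph{even} size, which is exactly guaranteed by the hypothesis that the type is $2\mu$ rather than a general partition, and it is precisely this evenness that makes the block-by-block matchings possible.
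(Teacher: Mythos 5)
Your proof is correct and follows essentially the same route as the paper: both identify the surviving matchings as exactly those that restrict to a perfect matching of each color class and count them as $\prod_{j=1}^{\ell(\mu)}(2\mu_j-1)!!$. The only difference is that the paper first reduces to the canonical sorted sequence $\bm{i}_{2\mu}$ via a symmetrization identity before making this observation, whereas you argue directly on the (possibly non-contiguous) color classes, which is a harmless and slightly more direct shortcut.
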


\begin{proof}
Given a partition $\mu$ of $n$, we define
$\mcal{M}(\mu)$ to be the set of all pairings $\mf{m}$ in $\mcal{M}([n])$ of the form
$$
\mf{m}= \mf{m}^{(1)} \sqcup \mf{m}^{(2)} \sqcup \cdots,
$$ 
where
$\mf{m}^{(1)}  \in \mcal{M}(\{1,2,\dots,\mu_1\})$, 
$\mf{m}^{(2)}  \in \mcal{M}(\{\mu_1+1,\mu_1+2,\dots,\mu_1+\mu_2\})$,
and so on.

In order to prove the lemma,
we can suppose
$$
(i_1,\dots,i_{2n}) =\bm{i}_{2\mu}=(\underbrace{1,1,\dots,1}_{2\mu_1},
\underbrace{2,2,\dots,2}_{2\mu_2},\dots, \underbrace{l,l,\dots,l}_{2\mu_l})
$$
because of 
$$
\sum_{\mf{m} \in \mcal{M}([n])} 
\(\prod_{\{p,q\} \in \mf{m}} \delta_{i_{p},i_{q}}\)
=\frac{1}{2^n n!} \sum_{\sigma \in S_{2n}} \prod_{j=1}^{n} 
\delta_{i_{\sigma(2j-1)}, i_{\sigma(2j)}}.
$$
In that case, for each $\mf{m} \in \mcal{M}([n])$,
the product $\prod_{\{p,q\} \in \mf{m}} \delta_{i_{p},i_{q}}$
does not vanish if and only if $\mf{m}\in \mcal{M}(\mu)$.
It follows the result since $|\mcal{M}(\mu)|= \prod_{j=1}^{\ell(\mu)} (2\mu_j-1)!!$.
\end{proof}

\begin{proof}[Proof of Equation \eqref{eq:numerator}]
From Lemma \ref{lem2},
\begin{align*}
\sum_{\mu \vdash n} 
\sum_{\begin{subarray}{c} \bm{k} \in [N]^{n} \\ \text{(type of $\bm{k}$)$=\mu$} \end{subarray}}
\prod_{j=1}^{\ell(\mu)} (2\mu_j-1)!!
=& \sum_{\mu \vdash n} 
\sum_{\begin{subarray}{c} \bm{k} \in [N]^n \\ \text{(type of $\bm{k}$)$=\mu$} \end{subarray}} 
\sum_{\mf{m} \in \mcal{M}([n])}
\prod_{\{p,q\} \in \mf{m}}\delta_{\tilde{k}_{p}, \tilde{k}_{q}} \\
=& \sum_{\mf{m} \in \mcal{M}([n])} \sum_{\bm{k} \in [N]^n} 
\prod_{\{p,q\} \in \mf{m}}\delta_{\tilde{k}_{p}, \tilde{k}_{q}}.
\end{align*}
Since it is immediate to see 
$$
\sum_{\bm{k} \in [N]^n} 
\prod_{\{p,q\} \in \mf{m}}\delta_{\tilde{k}_{p}, \tilde{k}_{q}}
= N^{\kappa(\mf{m})},
$$
it follows from Lemma \ref{lem1} that
$$
\sum_{\mu \vdash n} 
\sum_{\begin{subarray}{c} \bm{k} \in [N]^{n} \\ \text{(type of $\bm{k}$)$=\mu$} \end{subarray}}
\prod_{j=1}^{\ell(\mu)} (2\mu_j-1)!!
= \sum_{\mf{m} \in \mcal{M}([n])} N^{\kappa(\mf{m})} = 
N(N+2) \cdots (N+2n-2),
$$
so that \eqref{eq:numerator} has been proved. It completes the proof of Theorem
\ref{thm:COE-diagonal-moment}.
\end{proof}

\subsection{Off-diagonal entry }

Let $1 \le i \not=j \le N$.
Put $\bm{i}=(i,j,i,j,i,j,\dots,i,j) \in [N]^{2n}$.
In a similar way to a diagonal-entry case,
we have
\begin{align*}
\bE[|v_{ij}|^{2n}]
=& 
\sum_{\mu \vdash n} \frac{n!}{\mu!}
\sum_{
\begin{subarray}{c} \bm{k} \in [N]^n \\ \text{(type of $\bm{k}$) is $\mu$}
\end{subarray}
}
\bE[ U(\tilde{\bm{k}}, \bm{i}|\tilde{\bm{k}},\bm{i})] \\
=& \sum_{\mu \vdash n} \frac{n!}{\mu!}
\sum_{
\begin{subarray}{c} \bm{k} \in [N]^n \\ \text{(type of $\bm{k}$) is $\mu$}
\end{subarray}
}
\sum_{\sigma \in S_{2\mu}} \sum_{\tau \in S^{*}_{2n}}
\mr{Wg}^{U(N)}_{2n}(\sigma \tau).
\end{align*}
where 
$$
S_{2n}^{*}=S_{\{1,3,5,\dots,2n-1\}} \times
S_{\{2,4,6,\dots,2n\}}
$$
is the stabilizer subgroup for $\bm{i}$.
Since the number of sequences in $[N]^n$ of type $\mu$
is
$$
\frac{n!}{\mu!} \frac{N!}{\prod_{k \ge 1} m_k(\mu)! \cdot (N-\ell(\mu))!},
$$
we have

\begin{prop} \label{prop:offdiagonal-sum}
Let $V=(v_{ij})_{1 \le i,j \le N}$ be a COE matrix.
For positive integers $i,j,N,n$ with $1 \le i \not=j \le N$,
\begin{equation} \label{eq:offdiagonal-sum}
\bE[|v_{ij}|^{2n}] =\sum_{\mu \vdash n} 
\frac{(n!)^2}{(\mu!)^2 \prod_{k \ge 1} m_k(\mu)!}
N(N-1) \cdots (N-\ell(\mu)+1) W(\mu,N)
\end{equation}
where
\begin{equation}
W(\mu,N)=\sum_{\sigma \in S_{2\mu}} \sum_{\tau \in S^{*}_{2n} }
\mr{Wg}^{U(N)}_{2n}(\sigma \tau).
\end{equation}
\end{prop}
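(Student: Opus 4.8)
The plan is to specialize the general Weingarten expansion \eqref{eq:COEexpand1} to the off-diagonal moment and then organize the resulting sum according to the type of the summation index $\bm{k}$. Concretely, I would set $\bm{j}=\bm{j}'=\bm{i}=(i,j,i,j,\dots,i,j)\in[N]^{2n}$ in \eqref{eq:COEexpand1}. Since $i\ne j$, the permutations $\tau$ with $\bm{i}^{\tau}=\bm{i}$ are exactly the elements of the stabilizer $S_{2n}^{*}=S_{\{1,3,\dots,2n-1\}}\times S_{\{2,4,\dots,2n\}}$, the odd positions carrying $i$ and the even positions carrying $j$. This turns the moment into a sum over pairs $(\bm{k},\bm{k}')$ with $\bm{k}'\sim\bm{k}$, each contributing $\bE[U(\tilde{\bm{k}},\bm{i}|\tilde{\bm{k}}',\bm{i})]$.

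The first key step is to collapse the inner sum over $\bm{k}'\sim\bm{k}$. I would note that, as a product of scalars, $U(\tilde{\bm{k}},\bm{i}|\tilde{\bm{k}}',\bm{i})$ equals $\prod_{a=1}^{n}u_{k_a,i}u_{k_a,j}\,\overline{\prod_{a=1}^{n}u_{k'_a,i}u_{k'_a,j}}$; since $\bm{k}'$ is merely a reordering of $\bm{k}$, the two scalar products run over the same multiset of factors and are therefore literally equal. Hence each of the $n!/\mu!$ sequences $\bm{k}'\sim\bm{k}$ contributes the same number $\bE[U(\tilde{\bm{k}},\bm{i}|\tilde{\bm{k}},\bm{i})]$, and the inner sum is simply that value times $n!/\mu!$.

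The second key step is to show that $\bE[U(\tilde{\bm{k}},\bm{i}|\tilde{\bm{k}},\bm{i})]$ depends only on the type $\mu$ of $\bm{k}$ and equals $W(\mu,N)$. I would invoke invariance of the Haar measure under row permutations to replace a general $\bm{k}$ of type $\mu$ by the canonical representative $\bm{i}_{\mu}=(1^{\mu_1}2^{\mu_2}\cdots)$; for this choice $\tilde{\bm{k}}=\bm{i}_{2\mu}$ and the stabilizer $\{\sigma\in S_{2n}:(\tilde{\bm{k}})^{\sigma}=\tilde{\bm{k}}\}$ is exactly the Young subgroup $S_{2\mu}$ attached to the partition $2\mu=(2\mu_1,2\mu_2,\dots)$. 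Lemma \ref{lem:WgFormula} then gives $\bE[U(\tilde{\bm{k}},\bm{i}|\tilde{\bm{k}},\bm{i})]=\sum_{\sigma\in S_{2\mu}}\sum_{\tau\in S_{2n}^{*}}\mr{Wg}^{U(N)}_{2n}(\sigma\tau^{-1})$, which is $W(\mu,N)$ once one observes that $S_{2n}^{*}$ is a group, so reindexing $\tau\mapsto\tau^{-1}$ replaces $\mr{Wg}^{U(N)}_{2n}(\sigma\tau^{-1})$ by $\mr{Wg}^{U(N)}_{2n}(\sigma\tau)$ without changing the sum.

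Finally I would assemble the pieces: grouping the sum over $\bm{k}$ by type, the moment becomes $\sum_{\mu\vdash n}\frac{n!}{\mu!}\,\bigl(\#\{\bm{k}\in[N]^n:\text{type }\mu\}\bigr)\,W(\mu,N)$, and substituting the standard count $\#\{\bm{k}:\text{type }\mu\}=\frac{n!}{\mu!}\cdot\frac{N!}{\prod_{k\ge1}m_k(\mu)!\,(N-\ell(\mu))!}$ together with $N!/(N-\ell(\mu))!=N(N-1)\cdots(N-\ell(\mu)+1)$ yields \eqref{eq:offdiagonal-sum}. I expect the only genuinely delicate point to be the second step: one must check that passing to the canonical $\bm{k}$ by row permutation really does identify the stabilizer of $\tilde{\bm{k}}$ with the Young subgroup $S_{2\mu}$, and in particular that the alternating $i,j$-pattern of $\bm{i}$ (encoded in $S_{2n}^{*}$) is left untouched by this relabeling, so that the \emph{same} group $S_{2n}^{*}$ occurs for every $\bm{k}$ of a given type. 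The rest is routine bookkeeping.
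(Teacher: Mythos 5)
Your proposal is correct and follows essentially the same route as the paper: specialize \eqref{eq:COEexpand1} to $\bm{j}=\bm{j}'=(i,j,\dots,i,j)$, collapse the sum over $\bm{k}'\sim\bm{k}$ into a factor $n!/\mu!$, identify the stabilizers of $\tilde{\bm{k}}$ and of $\bm{i}$ with $S_{2\mu}$ and $S_{2n}^{*}$ via Lemma \ref{lem:WgFormula}, and count the sequences of each type. The paper's own proof is just a terser version of this (deferring to the diagonal-entry computation), and the delicate point you flag---that reducing $\bm{k}$ to the canonical representative preserves the alternating pattern, hence the same $S_{2n}^{*}$---does check out, since the induced position permutation acts on the pairs $\{2a-1,2a\}$ parity-preservingly.
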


To see the asymptotics for $\bE[|v_{ij}|^{2n}]$,
we focus on $W(\mu,N)$ in the limit $N \to \infty$.

\begin{lem} \label{lem:asymW}
Let $\mu \vdash n$. As $N \to \infty$,
$$
W(\mu,N)=
\begin{cases}
N^{-2n}-n^2 N^{-2n-1} +O(N^{-2n-2})
& \text{if $\mu=(1^n)$} \\
4N^{-2n} +O(N^{-2n-1}) & \text{if $\mu=(2,1^{n-2})$} \\
O(N^{-2n}) & \text{otherwise}.  
\end{cases}
$$
\end{lem}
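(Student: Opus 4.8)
The plan is to expand $W(\mu,N)$ according to the value of the product $\rho=\sigma\tau$ and then read off the first two orders directly from Lemma \ref{lem:asymWg}. Setting
$$
c_\mu(\rho)=\#\{(\sigma,\tau)\in S_{2\mu}\times S^{*}_{2n} \ | \ \sigma\tau=\rho\},
$$
we have $W(\mu,N)=\sum_{\rho\in S_{2n}} c_\mu(\rho)\,\mr{Wg}^{U(N)}_{2n}(\rho)$, where the nonnegative integers $c_\mu(\rho)$ sum to the constant $|S_{2\mu}|\cdot|S^{*}_{2n}|=(2\mu)!\,(n!)^2$, which is independent of $N$. Since Lemma \ref{lem:asymWg} gives $\mr{Wg}^{U(N)}_{2n}(\rho)=O(N^{-2n})$ uniformly over the finite group, with the sharper values $N^{-2n}+O(N^{-2n-2})$ at $\rho=\mr{id}_{2n}$ and $-N^{-2n-1}+O(N^{-2n-3})$ at a transposition, the finitely many terms combine to
$$
W(\mu,N)=c_\mu(\mr{id}_{2n})\,N^{-2n}-\Big(\sum_{\rho \text{ transposition}} c_\mu(\rho)\Big) N^{-2n-1}+O(N^{-2n-2}).
$$
In particular the ``otherwise'' bound $W(\mu,N)=O(N^{-2n})$ is immediate, so the whole task reduces to two integer counts: $A_\mu:=c_\mu(\mr{id}_{2n})$ and $B_\mu:=\sum_{\rho \text{ transposition}} c_\mu(\rho)$.

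For the leading coefficient, $A_\mu$ counts pairs with $\sigma\tau=\mr{id}_{2n}$, i.e.\ those $\sigma\in S_{2\mu}$ with $\tau=\sigma^{-1}\in S^{*}_{2n}$; since $S^{*}_{2n}$ is a subgroup this equals $A_\mu=|S_{2\mu}\cap S^{*}_{2n}|$. Each block of $S_{2\mu}$ is a run of $2\mu_j$ consecutive integers and therefore contains exactly $\mu_j$ odd and $\mu_j$ even positions, and an element of the intersection must preserve both the blocks and the two parity classes. Thus it permutes the $\mu_j$ odd and the $\mu_j$ even entries of each block independently, giving $A_\mu=\prod_{j}(\mu_j!)^2$. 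This already yields the stated leading terms: $A_{(1^n)}=1$ and $A_{(2,1^{n-2})}=(2!)^2=4$, so the two displayed leading coefficients follow (for $\mu=(2,1^{n-2})$ the transposition contribution is absorbed into $O(N^{-2n-1})$).

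It remains to compute $B_{(1^n)}$, the one genuinely combinatorial step. Here $S_{2\mu}=\prod_{k=1}^n\langle (2k-1,2k)\rangle$, so $\sigma$ is encoded by the set $T\subseteq[n]$ of blocks it swaps, while $\tau\in S^{*}_{2n}$ is a pair $(\tilde\alpha,\tilde\beta)\in S_n\times S_n$ acting on the odd and even positions. I would first record, position by position, when $\rho=\sigma\tau$ has a fixed point: the odd position $2j-1$ is fixed iff $\tilde\alpha(j)=j$ and $j\notin T$, and likewise $2j$ is fixed iff $\tilde\beta(j)=j$ and $j\notin T$. Since a permutation with exactly two non-fixed points is automatically a transposition, $B_{(1^n)}$ counts the triples $(T,\tilde\alpha,\tilde\beta)$ producing exactly two non-fixed points. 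A short case analysis on $|T|$ then finishes: $|T|\ge 2$ forces at least four non-fixed points and contributes nothing; $T=\varnothing$ forces one of $\tilde\alpha,\tilde\beta$ to be a transposition and the other the identity, contributing $2\binom{n}{2}=n(n-1)$; and $|T|=1$ forces $\tilde\alpha=\tilde\beta=\mr{id}$, contributing $n$. Hence $B_{(1^n)}=n(n-1)+n=n^2$, giving $W((1^n),N)=N^{-2n}-n^2N^{-2n-1}+O(N^{-2n-2})$.

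The main obstacle is this last count: keeping straight how $T$, $\tilde\alpha$, and $\tilde\beta$ jointly determine the fixed points of $\rho$, and checking in each surviving case that the two non-fixed points genuinely pair into a single transposition. Everything else is a mechanical consequence of Lemma \ref{lem:asymWg} and the block-versus-parity description of $S_{2\mu}\cap S^{*}_{2n}$.
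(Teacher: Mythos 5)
Your proposal is correct and follows essentially the same route as the paper: expand $W(\mu,N)$ over the values of $\sigma\tau$, feed in Lemma \ref{lem:asymWg}, and reduce to counting the pairs with $\sigma\tau=\mr{id}_{2n}$ (giving $|S_{2\mu}\cap S^{*}_{2n}|$, which is $1$, $4$ in the two displayed cases) and, for $\mu=(1^n)$, the pairs with $\sigma\tau$ a transposition (giving $n+2\binom{n}{2}=n^2$). Your parity/fixed-point analysis just makes explicit the step the paper dismisses as "easy to see," namely that for $\mu=(1^n)$ a transposition $\sigma\tau$ forces $\sigma=\mr{id}$ or $\tau=\mr{id}$.
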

\begin{proof}
We first consider the case where $\mu=(1^n)$.
Let $\sigma \in S_{(2^n)}=S_{\{1,2\}} \times S_{\{3,4\}} \times \cdots 
\times S_{\{2n-1,2n\}}$ and $\tau \in S_{2n}^{*}$.
It is easy to see that
if $\sigma \tau=\mr{id}_{2n}$ then $\sigma=\tau=\mr{id}_{2n}$, and that
if $\sigma \tau$ is a transposition then either
$\sigma=\mr{id}_{2n}$ or $\tau=\mr{id}_{2n}$.
Hence, from Lemma \ref{lem:asymWg} it follows that
$W((1^n),N)=N^{-2n}
- c N^{-2n-1}
+O(N^{-2n-2})$,
where 
$$
c=|\{\sigma \in S_{(2^n)} \ | \ \text{transpositions}\}| +
|\{\tau \in S_{2n}^{*} \ | \ \text{transpositions}\}|
=n+2\binom{n}{2}=n^2,
$$
so that the result at $\mu=(1^n)$ follows.

We next consider the case where $\mu=(2,1^{n-2})$.
Let $\sigma \in S_{2\mu}=
S_{\{1,2,3,4\}} \times S_{\{5,6\}} \times \cdots 
\times S_{\{2n-1,2n\}}$ and $\tau \in S_{2n}^{*}$.
Since $S_{2\mu} \cap S_{2n}^{*}=
\{ \mr{id}_{2n}, (1 \ 3),  (2 \ 4), (1 \ 3) (2 \ 4)\}$, 
the number of pairs $(\sigma,\tau)$ satisfying $\sigma \tau=\mr{id}_{2n}$
is equal to $4$.
Hence, from Lemma \ref{lem:asymWg}, we have
$W((2,1^{n-2}),N)= 4N^{-2n}
+O(N^{-2n-1})$.

The case where $\mu \not=(1^n), (2,1^{n-2})$ is clear from Lemma \ref{lem:asymWg}.
\end{proof}

We have the following leading and subleading terms in $\bE[|v_{ij}|^{2n}]$
in the limit $N \to \infty$.

\begin{thm}(See Theorem \ref{thm:COE-offdiagonal-moment}.)
Fix positive integers $i,j,n$ with $i \not= j$. 
Let $V^{(N)}=(v_{ij}^{(N)})$, $N \ge 1$,  be a sequence of COE matrices.
As $N \to \infty$,
$$
\bE[|v_{ij}^{(N)}|^{2n}]= n! N^{-n} - n! \frac{n(n+1)}{2}  N^{-n-1}
+O(N^{-n-2}). 
$$
\end{thm}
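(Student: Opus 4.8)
The plan is to feed the asymptotics of Lemma~\ref{lem:asymW} into the exact identity \eqref{eq:offdiagonal-sum} of Proposition~\ref{prop:offdiagonal-sum} and to retain only those partitions that survive to orders $N^{-n}$ and $N^{-n-1}$. The decisive point is a degree count. For each $\mu \vdash n$ the falling factorial $N(N-1)\cdots(N-\ell(\mu)+1)$ is of order $N^{\ell(\mu)}$, while the combinatorial prefactor $\frac{(n!)^2}{(\mu!)^2 \prod_{k\ge1}m_k(\mu)!}$ is a constant independent of $N$. Since Lemma~\ref{lem:asymW} gives $W(\mu,N)=O(N^{-2n})$ for every $\mu$, the $\mu$-summand of \eqref{eq:offdiagonal-sum} is $O(N^{\ell(\mu)-2n})$. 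As $\ell(\mu)\le n$ with equality exactly for $\mu=(1^n)$, and $\ell(\mu)=n-1$ exactly for $\mu=(2,1^{n-2})$, all remaining partitions have $\ell(\mu)\le n-2$ and contribute only $O(N^{-n-2})$. Hence only $\mu=(1^n)$ and $\mu=(2,1^{n-2})$ need be computed.

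First I would handle $\mu=(1^n)$. Here $\mu!=1$ and $\prod_{k\ge1}m_k(\mu)!=n!$, so the prefactor equals $n!$; moreover the falling factorial is $N(N-1)\cdots(N-n+1)=N^n-\binom{n}{2}N^{n-1}+O(N^{n-2})$, and Lemma~\ref{lem:asymW} supplies $W((1^n),N)=N^{-2n}-n^2N^{-2n-1}+O(N^{-2n-2})$. Multiplying the three factors and collecting the two top orders gives the contribution
$$
n!\,N^{-n}-n!\Bigl(\tbinom{n}{2}+n^2\Bigr)N^{-n-1}+O(N^{-n-2}),
$$
in which the $N^{-n-1}$ term is fed both by the subleading term $-\binom{n}{2}N^{n-1}$ of the falling factorial and by the $-n^2N^{-2n-1}$ term of $W$.

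Next I would handle $\mu=(2,1^{n-2})$, where only the leading order matters. Here $\ell(\mu)=n-1$, $\mu!=2$, and $\prod_{k\ge1}m_k(\mu)!=(n-2)!$ since there are $n-2$ parts equal to $1$ and a single part equal to $2$; thus the prefactor is $\frac{(n!)^2}{4\,(n-2)!}=\frac{n!\,n(n-1)}{4}$. Using $N(N-1)\cdots(N-n+2)=N^{n-1}+O(N^{n-2})$ together with $W((2,1^{n-2}),N)=4N^{-2n}+O(N^{-2n-1})$, this partition contributes $n!\,n(n-1)N^{-n-1}+O(N^{-n-2})$.

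Finally I would add the two contributions. The $N^{-n}$ coefficient comes solely from $\mu=(1^n)$ and equals $n!$. For the $N^{-n-1}$ coefficient I would combine $-n!\bigl(\binom{n}{2}+n^2\bigr)$ from $\mu=(1^n)$ with $+n!\,n(n-1)$ from $\mu=(2,1^{n-2})$; simplifying $-\bigl(\frac{n(n-1)}{2}+n^2\bigr)+n(n-1)=-\frac{n(n+1)}{2}$ produces exactly the claimed subleading coefficient $-n!\frac{n(n+1)}{2}$. The main, though still elementary, obstacle is that this answer hinges on a cancellation: the positive contribution of $(2,1^{n-2})$ partially offsets the negative contribution of $(1^n)$, so the two combinatorial prefactors and the subleading expansion of the falling factorial for $(1^n)$ must all be evaluated exactly; the pure degree-counting that kills every other $\mu$ is already delivered by Lemmas~\ref{lem:asymWg} and~\ref{lem:asymW}.
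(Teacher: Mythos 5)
Your proposal is correct and follows essentially the same route as the paper: it splits the sum in \eqref{eq:offdiagonal-sum} into $\mu=(1^n)$, $\mu=(2,1^{n-2})$, and the rest, uses the same asymptotics for the falling factorial and for $W(\mu,N)$ from Lemma \ref{lem:asymW}, and the final cancellation $-\bigl(\binom{n}{2}+n^2\bigr)+n(n-1)=-\frac{n(n+1)}{2}$ is exactly the computation the paper leaves as ``a straightforward calculation.'' No gaps.
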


\begin{proof}
We divide the sum in \eqref{eq:offdiagonal-sum}
into $\mu=(1^n)$, $\mu=(2,1^{n-2})$, and  other $\mu$'s.
Since
$$
N(N-1) \cdots (N-\ell(\mu)+1)= 
\begin{cases}
N^n - \binom{n}{2} N^{n-1}+ O(N^{n-2})
& \text{if $\mu=(1^n)$}, \\
N^{n-1} +O(N^{n-2}) & \text{if $\mu=(2,1^{n-2})$}, \\
O(N^{n-2}) & \text{otherwise},  
\end{cases}
$$
it follows from Lemma \ref{lem:asymW} that
\begin{align*}
\bE[|v_{ij}^{(N)}|^{2n}] =& n! \(N^n - \binom{n}{2} N^{n-1} +O(N^{n-2})\)
(N^{-2n} -n^2 N^{-2n-1} +O(N^{-2n-2})) \\
&+ \frac{n!}{4} n(n-1)  (N^{n-1} +O(N^{n-2})) (4N^{-2n} +O(N^{-2n-1})) \\
&+ \sum_{\begin{subarray}{c} \mu \vdash n \\ \mu \not= (1^n), (2,1^{n-2}) 
\end{subarray}} O(N^{n-2}) \cdot O(N^{-2n}),
\end{align*}
from which a straightforward calculation gives the result.
\end{proof}

\begin{cor}(see Theorem \ref{cor:limitGaussian}.)
Fix $i \not= j$.
As $N \to \infty$,
a sequence of the random variables $\{ \sqrt{N} v_{ij}^{(N)} \}_{N \ge 1}$  converges to
a standard complex Gaussian random variable in distribution.
\end{cor}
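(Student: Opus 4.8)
The plan is to mirror the argument for the diagonal case in Corollary \ref{cor:limitdist1}, using the method of moments for complex Gaussian variables. Recall that a standard complex Gaussian $Z$ is characterized among its moments by $\bE[Z^m \overline{Z}^n] = \delta_{m,n}\, n!$ for all nonnegative integers $m,n$. So it suffices to set $Z_N = \sqrt{N}\, v_{ij}^{(N)}$ and verify that the mixed moments $\bE[Z_N^m \overline{Z_N}^n]$ converge to $\delta_{m,n}\, n!$ as $N \to \infty$, and then invoke a moment-convergence criterion.

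First I would dispose of the off-diagonal mixed moments with $m \neq n$. Writing $\bE[Z_N^m \overline{Z_N}^n] = N^{(m+n)/2}\, \bE[v_{ij}^m \overline{v_{ij}}^n]$ and using $v_{ij}=v_{ji}$, the unconjugated factors contribute the index sequence $(i,j,i,j,\dots,i,j) \in [N]^{2m}$, while the conjugated factors contribute $(i,j,\dots,i,j) \in [N]^{2n}$. By Lemma \ref{lem:COEcondition} this expectation vanishes unless the two numbers of factors agree, that is, unless $m=n$; hence $\bE[Z_N^m \overline{Z_N}^n]=0$ for all $m\neq n$ and every $N$.

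Next I would treat the balanced case $m=n$. Here $\bE[Z_N^n \overline{Z_N}^n] = N^n\, \bE[|v_{ij}^{(N)}|^{2n}]$, and the off-diagonal asymptotic theorem just established gives $\bE[|v_{ij}^{(N)}|^{2n}] = n!\,N^{-n} + O(N^{-n-1})$, so that $\lim_{N \to \infty} \bE[Z_N^n \overline{Z_N}^n] = n!$. Combining the two cases, every mixed moment of $Z_N$ converges to the corresponding moment of a standard complex Gaussian. Finally I would conclude convergence in distribution by the same moment-convergence criterion cited for the diagonal case (the argument in the proof of Theorem 3.1 in \cite{PR}).

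The only point requiring care is the bookkeeping in the vanishing step: one must confirm that Lemma \ref{lem:COEcondition} is applied with $\bm{j}$ and $\bm{j}'$ the length-$2m$ and length-$2n$ alternating sequences, and that the condition $\bm{j} \sim \bm{j}'$ is automatically met once $m=n$ (the two sequences being literally equal). No genuine obstacle arises, since the heavy lifting---the asymptotic expansion of $\bE[|v_{ij}^{(N)}|^{2n}]$---has already been carried out in the preceding theorem.
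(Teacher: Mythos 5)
Your proposal is correct and follows essentially the same route as the paper, which simply notes that the proof is analogous to the diagonal case: the method of moments with $\bE[Z^m\overline{Z}^n]=\delta_{m,n}\,n!$, vanishing of unbalanced moments via Lemma \ref{lem:COEcondition}, convergence of the balanced moments from Theorem \ref{thm:COE-offdiagonal-moment}, and the moment-convergence criterion from \cite{PR}. Nothing further is needed.
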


\begin{proof}
The proof is similar to that of Corollary \ref{cor:limitdist1}.
\end{proof}

\appendix

\section{Appendix: Examples of lower degrees}

Let $V=(v_{ij})_{1 \le i,j \le N}$ be a COE matrix.
We give examples of joint moments of small degrees.

\subsection{Degree 1}

\begin{example}
Let $j_1,j_2,j_1',j_2' \in [N]$. 
Consider the average $\bE[v_{j_1 j_2} \overline{v_{j_1'j_2'}}]$. 
\begin{enumerate}
\item $\bE[|v_{ii}|^2] = \frac{2}{N+1}$ for each $1 \le i \le N$;  
\item $\bE[|v_{ij}|^2] = \frac{1}{N+1}$ for $ 1 \le i \not= j \le N$;
\item $\bE[v_{j_1 j_2} \overline{v_{j_1' j_2'}}]=0$
unless $(j_1,j_2) \sim (j_1',j_2')$. 
\end{enumerate}
\end{example}
 
The first case follows from Theorem \ref{thm:COE-diagonal-moment}.
The second case follows from Proposition \ref{prop:offdiagonal-sum}:
$$
\bE[|v_{ij}|^2]=N \cdot W((1),N)= N \sum_{\sigma \in S_2} \mr{Wg}^{U(N)}_2(\sigma) =\frac{N}{N(N+1)}
= \frac{1}{N+1}.
$$
The rest follows from Lemma \ref{lem:COEcondition}.

\subsection{Degree 2}

Let $ 1 \le i \not=j \le N$ and let $j_1,j_2,j_3,j_4 \in [N]$ be distinct.
\begin{align}
\bE[|v_{ii}|^4]=& \bE[|v_{11}|^4]=\frac{8}{(N+1)(N+3)}.  \label{degree2-1}\\
\bE[ |v_{ij}|^{4}] =&\bE[ |v_{12}|^{4}]= \frac{2}{N(N+3)}. \label{degree2-2}\\
\bE[|v_{ii} v_{jj}|^2]=&\bE[|v_{11} v_{22}|^2]= \frac{4(N+2)}{N(N+1)(N+3)}. \label{degree2-3}\\
\bE[v_{ij}^2 \overline{v_{ii} v_{jj}}]=& 
\bE[v_{12}^2 \overline{v_{11} v_{22}}]=\frac{-4}{N(N+1)(N+3)}. \label{degree2-4} \\
\bE[|v_{j_1 j_2} v_{j_3 j_4}|^2] =& \bE[|v_{12} v_{34}|^2] = \frac{N+2}{N(N+1)(N+3)}.
\label{degree2-5}
\end{align}

The equation \eqref{degree2-1} follows from Theorem \ref{thm:COE-diagonal-moment}.

For each partition $\mu$ of $n$, put
$\mr{Wg}_n^{U(N)}(\mu):=\mr{Wg}_n^{U(N)}(\sigma)$,
where $\sigma$ is a permutation in $S_{n}$ of cycle-type $\mu$.
We use the following explict expressions:
\begin{align*}
\mr{Wg}_4^{U(N)}((4))=& \frac{-5}{N(N^2-1)(N^2-4)(N^2-9)}; \\
\mr{Wg}_4^{U(N)}((3,1))=& \frac{2N^2-3}{N^2(N^2-1)(N^2-4)(N^2-9)}; \\
\mr{Wg}_4^{U(N)}((2,2))=& \frac{N^2+6}{N^2(N^2-1)(N^2-4)(N^2-9)}; \\
\mr{Wg}_4^{U(N)}((2,1,1))=& \frac{-1}{N(N^2-1)(N^2-9)}; \\
\mr{Wg}_4^{U(N)}((1^4))=& \frac{N^4-8N^2+6}{N^2(N^2-1)(N^2-4)(N^2-9)}.
\end{align*}

\begin{proof}[Proof of \eqref{degree2-2}]
We apply Proposition \ref{prop:offdiagonal-sum}.
Since 
\begin{align*}
W((1^2),N)=& \sum_{\sigma \in <(1 \ 2), (3 \ 4)>} 
\sum_{\tau \in <(1 \ 3), (2 \ 4)>} \mr{Wg}^{U(N)}_4(\sigma \tau)  \\
=& \mr{Wg}^{U(N)}_4 ((1^4)) + 4 \mr{Wg}^{U(N)}_4 ((2,1,1)) + 3\mr{Wg}^{U(N)}_4 ((2,2)) \\
& \quad  + 4 \mr{Wg}^{U(N)}_4 ((3,1)) +4\mr{Wg}^{U(N)}_4 ((4)) \\
=& \frac{N^2 +N+2}{N^2(N-1)(N+1)(N+2)(N+3)}
\end{align*}
and 
$$
W((2),N)= \sum_{\sigma \in S_4} \sum_{\tau \in S_4^{*}} \mr{Wg}^{U(N)}_4(\sigma \tau)
= \frac{4}{N(N+1)(N+2)(N+3)},
$$
 Proposition \ref{prop:offdiagonal-sum} gives
$$
\bE[ |v_{ij}|^{4}] = 2 N (N-1) \cdot W((1^2),N)+ N \cdot  W((2),N) 
= \frac{2}{N(N+3)}.
$$
\end{proof}

Consider the average $\bE[v_{j_1 j_2}v_{j_3 j_4} \overline{v_{j_1'j_2'} v_{j_3' j_4'}}]$,
where $\bm{j}=(j_1,j_2,j_3,j_4) \sim \bm{j}'=(j_1',j_2',j_3',j_4')$.
The equation \eqref{eq:COEexpand1} and a direct calculation give
\begin{align}
& \bE[v_{j_1 j_2}v_{j_3 j_4} \overline{v_{j_1'j_2'} v_{j_3' j_4'}}]  
\label{eq:moment2}
\\
=&\sum_{\nu \vdash 2} 
\sum_{\begin{subarray}{c}\bm{k} \in [N]^2 \\ \text{type $\nu$} \end{subarray}} 
\sum_{
\begin{subarray}{c} \bm{k}' \in [N]^2 \\ \bm{k}' \sim \bm{k} \end{subarray}}
\bE[U(\tilde{\bm{k}},\bm{j}| \tilde{\bm{k}}', \bm{j}')] \notag \\
=&  
 N(N-1) \{\bE[U((1,1,2,2),\bm{j}|(1,1,2,2),\bm{j}')] 
 +\bE[U((1,1,2,2),\bm{j}|(2,2,1,1),\bm{j}')]\} \notag \\
 &+N \cdot \bE [U((1^4),\bm{j}|(1^4),\bm{j})]. \notag
\end{align}
We note that 
$\bE [U((1^4),\bm{j}|(1^4),\bm{j})]= \frac{\mu!}{N(N+1)(N+2)(N+3)}$
if $\bm{j}$ is of type $\mu \vdash 4$.

\begin{proof}[Proof of \eqref{degree2-3}]
From Lemma \ref{lem:WgFormula}, we have
\begin{align*}
& \bE[U((1,1,2,2),(1,1,2,2)|(1,1,2,2),(1,1,2,2)] \\
=& 4 \sum_{\sigma \in S_{(2,2)}} \mr{Wg}^{U(N)}_4(\sigma) 
= 4 \{ \mr{Wg}^{U(N)}_4 ((1^4))+ 2\mr{Wg}^{U(N)}_4 ((2,1,1)) +\mr{Wg}^{U(N)}_4 ((2,2)) \} \\
=&
\frac{4}{N^2(N-1)(N+3)}
\end{align*}
and
\begin{align*}
&\bE[U((1,1,2,2),(1,1,2,2)|(2,2,1,1),(1,1,2,2))] \\
=& \sum_{\sigma ,\tau \in S_{(2,2)}} \mr{Wg}^{U(N)}_4 (\sigma (1\ 3)(2\ 4) \tau^{-1}) 
= 4 \sum_{\sigma \in S_{(2,2)}} \mr{Wg}^{U(N)}_4 ( (1\ 3)(2\ 4) \sigma) \\
=& 4 \{2 \mr{Wg}^{U(N)}_4((2,2))+2\mr{Wg}^{U(N)}_4((4))\} 
= \frac{8}{N^2 (N^2-1)(N+2)(N+3)}.
\end{align*}
Hence the equation \eqref{eq:moment2} gives
\begin{align*}
\bE[|v_{ii} v_{jj}|^2]
=& N(N-1) \cdot \frac{4}{N^2 (N-1)(N+3)} + N(N-1) \cdot \frac{8}{N^2(N^2-1)(N+2)(N+3)} \\
&+ N \frac{4}{N(N+1)(N+2)(N+3)} \\
=& \frac{4(N+2)}{N(N+1)(N+3)}.
\end{align*}
\end{proof}

\begin{proof}[Proof of \eqref{degree2-4}]
From Lemma \ref{lem:WgFormula}, we have
\begin{align*}
&\bE[U((1,1,2,2),(1,2,1,2)|(1,1,2,2),(1,1,2,2))] \\
=& \sum_{\sigma,\tau \in S_{(2,2)}} \mr{Wg}^{U(N)}_4(\sigma ((2\ 3)\tau )^{-1})  \\
=& 4 \sum_{\sigma \in S_{(2,2)}} \mr{Wg}^{U(N)}_4 (\sigma (2 \ 3))  \\
=& 4 \left\{ \mr{Wg}^{U(N)}_4((2,1,1))+2 \mr{Wg}^{U(N)}_4 ((3,1))+\mr{Wg}^{U(N)}_4 ((4)) \right\} \\
=& \frac{-4}{N^2 (N-1)(N+2)(N+3)}.
\end{align*}
Similarly, we have 
$$
\bE[U((1,1,2,2),(1,2,1,2)|(2,2,1,1),(1,1,2,2))] =\frac{-4}{N^2 (N-1)(N+2)(N+3)}.
$$
Hence the equation \eqref{eq:moment2} gives
\begin{align*}
\bE[v_{ij}^2 \overline{v_{ii} v_{jj}}]
=& 2N(N-1) \cdot \frac{-4}{N^2 (N-1)(N+2)(N+3)} + N \cdot \frac{4}{N(N+1)(N+2)(N+3)} \\
=& \frac{-4}{N(N+1)(N+3)}.
\end{align*}
\end{proof}

\begin{proof}[Proof of \eqref{degree2-5}]
In a similar way to the proof of \eqref{degree2-3}, we have
\begin{align*}
&\bE[U((1,1,2,2),(1,2,3,4)|(1,1,2,2),(1,2,3,4))] \\
=& \sum_{\sigma \in S_{(2,2)}} \mr{Wg}^{U(N)}_4 (\sigma) 
= \frac{1}{N^2 (N-1)(N+3)}
\end{align*}
and
\begin{align*}
&\bE[U((1,1,2,2),(1,2,3,4)|(2,2,1,1),(1,2,3,4))] \\
=& \sum_{\sigma \in S_{(2,2)}} \mr{Wg}^{U(N)}_4 ( \sigma(1\ 3)(2\ 4) ) 
= \frac{2}{N^2 (N^2-1)(N+2)(N+3)}.
\end{align*}
Hence the equation \eqref{eq:moment2} gives
\begin{align*}
\bE[|v_{j_1 j_2} v_{j_3 j_4}|^2] =& \bE[|v_{12} v_{34}|^2]  \\
=& N(N-1) \cdot \frac{1}{N^2(N-1)(N+3)} + N(N-1) \cdot \frac{2}{N^2 (N^2-1)(N+2)(N+3)} \\
&+ N \cdot \frac{1}{N(N+1)(N+2)(N+3)} \\ 
=& \frac{N+2}{N(N+1)(N+3)}. 
\end{align*}\end{proof}

\subsection{Moments of traces}

\begin{align*}
\bE[ |\tr (V)|^4] =& \frac{8(N^2+2N-2)}{(N+1)(N+3)}, \\
\bE[|\tr(V^2)|^2] =& \frac{4(N^2+2N-1)}{(N+1)(N+3)}, \\
\bE[\tr(V^2) \overline{\tr(V)^2}]=& \frac{8}{(N+1)(N+3)},
\end{align*}
which are obtained in \cite[Appendix]{JM} from symmetric function theory.
If we use results in the previous subsection,
another proof is given as follows.
\begin{align*}
\bE[ |\tr (V)|^4] =& \sum_{i_1, i_2, i_3, i_4} \bE[ v_{i_1 i_1} v_{i_2 i_2}
\overline{ v_{i_3 i_3} v_{i_4 i_4}}] \\
=& N \bE[|v_{11}|^4] + 2 N(N-1) \bE[|v_{11}v_{22}|^2] 
= \frac{8(N^2+2n-2)}{(N+1)(N+3)};
\end{align*}
\begin{align*}
\bE[ |\tr (V^2)|^2] =& \sum_{i_1, i_2, i_3, i_4} \bE[ v_{i_1 i_2}^2 \overline{v_{i_3 i_4}^2}] \\
=& N \bE[|v_{11}|^4] + 2 N(N-1) \bE[ |v_{12}|^4]
= \frac{4(N^2+2N-1)}{(N+1)(N+3)};
\end{align*}
\begin{align*}
\bE[\tr(V^2) \overline{\tr(V)^2}]=&
\sum_{i_1, i_2, i_3, i_4} \bE [ v_{i_1 i_2}^2 \overline{v_{i_3 i_3} v_{i_4 i_4}}] \\
=& N \bE [|v_{11}|^4] + 2N(N-1) \bE[ v_{12}^2 \overline{v_{11} v_{22}}]
= \frac{8}{(N+1)(N+3)}.
\end{align*}


\end{document}